\newtheorem{thm}{Theorem}[section]
\newtheorem{lem}[thm]{Lemma}
\newtheorem{false statement}{False statement}
\theoremstyle{definition}
\makeatletter \@addtoreset{equation}{section}
\def\hg{\mathcal{G}}
\begin{document}
\title{Extremal triangle-free graphs with chromatic number at least four}
\author{Sijie Ren\footnote{Department of Mathematics, Taiyuan University of Technology, Taiyuan 030024, P. R. China. E-mail:rensijie1@126.com. }\quad\quad
Jian Wang\footnote{Department of Mathematics, Taiyuan University of Technology, Taiyuan 030024, P. R. China. E-mail:wangjian01@tyut.edu.cn. Research supported by NSFC No.12471316.} \quad\quad
Shipeng Wang\footnote{Department of Mathematics, Jiangsu University, Zhenjiang, Jiangsu 212013, P. R. China. E-mail:spwang22@ujs.edu.cn. Research supported by NSFC No.12001242.} \quad\quad
Weihua Yang\footnote{Department of Mathematics, Taiyuan University of Technology, Taiyuan 030024, P. R. China. E-mail:yangweihua@tyut.edu.cn. Research supported by NSFC No.12371356. }\\
 }
\date{}
\maketitle

\begin{abstract}
Let $G$ be an $n$-vertex triangle-free graph.  The celebrated Mantel's theorem showed that  $e(G)\leq \lfloor\frac{n^2}{4}\rfloor$. In 1962, Erd\H{o}s  (together with Gallai), and independently Andr\'{a}sfai, proved that if $G$ is non-bipartite then $e(G)\leq \lfloor\frac{(n-1)^2}{4}\rfloor+1$.  In this paper, we extend this result and show that if $G$ has chromatic number at least four and $n\geq 90$, then  $e(G)\leq \lfloor\frac{(n-3)^2}{4}\rfloor+5$.
The blow-ups of Gr\"{o}tzsch graph show that this is best possible.
\end{abstract}

\noindent{\bf Keywords:} stability; odd cycles; triangle-free graphs; Gr\"{o}tzsch graph.

\section{Introduction}

Let $G=(V,E)$ be a simple undirected graph with vertex set $V(G)$ and edge set $E(G)$. We use $e(G)$ to denote the number of edges of $G$.  A graph $G$ is called {\it $F$-free} if it does not contain a copy of $F$ as a subgraph. The {\it Tur\'{a}n number} ${\rm ex}(n,F)$ is defined as  the maximum number of edges in an $F$-free graph on $n$ vertices. An $F$-free graph on $n$ vertices with ${\rm ex}(n,F)$ edges is called an extremal graph for $F$. In 1907, Mantel proved a celebrated result.

\begin{thm}[\cite{Mantel}]\label{thm-1.1}
	$${\rm ex}(n,K_3)= \left\lfloor\frac{n^2}{4}\right\rfloor.$$
\end{thm}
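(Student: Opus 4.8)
The plan is to prove the two inequalities separately: first a construction giving the lower bound $\mathrm{ex}(n,K_3)\ge\lfloor n^2/4\rfloor$, and then a short counting argument for the matching upper bound.

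For the lower bound I would exhibit the complete bipartite graph $K_{\lfloor n/2\rfloor,\lceil n/2\rceil}$ with parts of sizes $\lfloor n/2\rfloor$ and $\lceil n/2\rceil$. It is triangle-free, since any triangle would place two of its vertices in the same part, but each part is an independent set. Its number of edges is $\lfloor n/2\rfloor\cdot\lceil n/2\rceil=\lfloor n^2/4\rfloor$, which establishes $\mathrm{ex}(n,K_3)\ge\lfloor n^2/4\rfloor$.

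For the upper bound, let $G$ be any triangle-free graph on $n$ vertices. The key observation is that for every edge $uv\in E(G)$ the neighborhoods $N(u)$ and $N(v)$ are disjoint, because a common neighbor $w$ would create the triangle $uvw$; hence $d(u)+d(v)=|N(u)\cup N(v)|\le n$. Summing this inequality over all edges and regrouping the left-hand side by vertex gives
$$\sum_{v\in V(G)} d(v)^2=\sum_{uv\in E(G)}\bigl(d(u)+d(v)\bigr)\le n\,e(G).$$
I would then apply the Cauchy--Schwarz inequality to the degree sequence,
$$\sum_{v\in V(G)} d(v)^2\ge\frac{1}{n}\Bigl(\sum_{v\in V(G)} d(v)\Bigr)^2=\frac{(2e(G))^2}{n}=\frac{4\,e(G)^2}{n},$$
and combine the two displays to obtain $4e(G)^2/n\le n\,e(G)$, whence $e(G)\le n^2/4$. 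Since $e(G)$ is an integer, this sharpens to $e(G)\le\lfloor n^2/4\rfloor$.

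The argument is short and presents no serious obstacle; the only point requiring care is the passage from the real bound $n^2/4$ to the integer floor $\lfloor n^2/4\rfloor$, which is immediate from integrality of the edge count but should be stated explicitly. An alternative route avoiding Cauchy--Schwarz is induction on $n$: delete the two endpoints of an edge $uv$, observe that the remaining graph on $n-2$ vertices is triangle-free and so has at most $\lfloor(n-2)^2/4\rfloor$ edges by induction, and bound the number of deleted edges by $d(u)+d(v)-1\le n-1$, giving $e(G)\le\lfloor(n-2)^2/4\rfloor+(n-1)=\lfloor n^2/4\rfloor$. I would nonetheless keep the degree-sum proof as the main line because of its brevity and its clean identification of the extremal configuration.
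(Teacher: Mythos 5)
Your proof is correct. Note that the paper does not prove this statement at all: it is the classical theorem of Mantel (1907), quoted with a citation and used as a black box (e.g.\ in the proof of Lemma~\ref{lem-2.3} and in Case~2 of the proof of Theorem~\ref{thm-1.4}), so there is no proof in the paper to compare against. Both of your arguments --- the construction $K_{\lfloor n/2\rfloor,\lceil n/2\rceil}$ for the lower bound together with the degree-sum identity $\sum_v d(v)^2=\sum_{uv\in E}(d(u)+d(v))\le n\,e(G)$ combined with Cauchy--Schwarz, and the alternative induction deleting the endpoints of an edge --- are standard, complete, and correct, including the passage to the floor via integrality of $e(G)$.
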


In 1962, Mantel's theorem was refined by the following result, which was proved by Erd\H{o}s  (together with Gallai), and independently by Andr\'{a}sfai.

\begin{thm}[\cite{erdos2}]\label{erdos2}
	Let $G$ be a non-bipartite triangle-free graph on $n$ vertices. Then
	$$e(G)\leq \left\lfloor\frac{(n-1)^2}{4}\right\rfloor+1.$$
\end{thm}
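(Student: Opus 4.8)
The plan is to build the estimate around a shortest odd cycle. Since $G$ is non-bipartite it contains an odd cycle, and since $G$ is triangle-free the shortest odd cycle $C=v_0v_1\cdots v_{2k}v_0$ (indices modulo $2k+1$) has length $2k+1\geq 5$, so $k\geq 2$. Writing $R=V(G)\setminus V(C)$ with $|R|=n-2k-1$, I would split the edges of $G$ into those inside $C$, those between $C$ and $R$, and those inside $R$, and bound each group separately.

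First I would record two standard structural properties of a shortest odd cycle in a triangle-free graph. (i) $C$ has no chord: a chord joining $v_i,v_j$ would split $C$ into two shorter cycles whose lengths sum to $2k+3$, so one of them is odd and shorter than $C$, contradicting minimality; hence $G[V(C)]$ contributes exactly $2k+1$ edges. (ii) Every $u\in R$ satisfies $|N(u)\cap V(C)|\leq 2$, and if $u$ has two neighbors on $C$ they lie at distance exactly $2$ along $C$. Indeed, if $u\sim v_i$ and $u\sim v_j$, the two arcs of $C$ between $v_i$ and $v_j$ have lengths of opposite parity (they sum to $2k+1$); the odd arc, say of length $\ell$, together with the path $v_iuv_j$ forms an odd cycle of length $\ell+2$, so minimality gives $\ell+2\geq 2k+1$ and hence the even arc has length at most $2$, forcing it to be exactly $2$. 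Since no three vertices of a cycle of length at least $5$ are pairwise at distance $2$, this also yields $|N(u)\cap V(C)|\leq 2$.

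With these in hand the count is immediate. The number of $C$–$R$ edges is at most $2|R|=2(n-2k-1)$ by (ii), and $G[R]$ is triangle-free, so Mantel's theorem (Theorem \ref{thm-1.1}) bounds its edges by $\lfloor (n-2k-1)^2/4\rfloor$. Therefore
$$e(G)\leq (2k+1)+2(n-2k-1)+\left\lfloor\frac{(n-2k-1)^2}{4}\right\rfloor=:f(k).$$
A short computation shows $f(k+1)-f(k)=-(n-2k-1)-1<0$, so $f$ is strictly decreasing in $k$ and is maximized at $k=2$; and since $(n-1)^2/4-(n-5)^2/4=2n-6$ is an integer, $f(2)=2n-5+\lfloor (n-5)^2/4\rfloor=\lfloor (n-1)^2/4\rfloor+1$, which is exactly the claimed bound.

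The genuinely delicate step is the structural analysis in (ii): one must track the parities of the two arcs carefully and use minimality of $C$ to rule out short odd cycles and triangles simultaneously. The remaining arithmetic—the monotonicity of $f$ and the evaluation at $k=2$—is routine. I would finally remark that the bound is attained by taking a balanced complete bipartite graph on $n-1$ vertices, deleting one edge $xy$, and adding a new vertex adjacent to exactly $x$ and $y$; this graph is triangle-free, contains a $C_5$, and has $\lfloor (n-1)^2/4\rfloor+1$ edges, confirming that equality is best possible and that any extremal graph must have odd girth exactly $5$.
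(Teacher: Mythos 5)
Your proof is correct, and all of its steps check out: the chordlessness of a shortest odd cycle, the claim that each vertex outside $C$ has at most two neighbours on $C$ (with the parity argument on the two arcs and the observation that three vertices of $C_{2k+1}$ cannot be pairwise at distance $2$), the three-way edge count, the computation $f(k+1)-f(k)=-(n-2k)<0$, and the evaluation $f(2)=\lfloor (n-1)^2/4\rfloor+1$. One point of comparison worth noting: the paper does not actually prove this statement — Theorem \ref{erdos2} is quoted with a citation to Erd\H{o}s, so there is no internal proof to measure your argument against. Your argument is the classical self-contained one (shortest odd cycle plus Mantel's theorem applied to the complement of the cycle), and it is consistent with how the paper uses the result: your extremal construction is exactly the paper's graph $H_0$ (the balanced complete bipartite graph $T_2(n-1)$ with an edge $xy$ replaced by a path $xzy$), and your closing remark that extremal graphs have odd girth exactly $5$, which follows from the strict monotonicity of $f$, is a genuine (small) strengthening that the paper's statement does not record.
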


Let $T_r(n)$ denote the complete $r$-partite graph on $n$ vertices whose part sizes differ by at most $1$ and let $t_r(n)=e(T_r(n))$. Let $H_0$ be a graph obtained from $T_2(n-1)$ by replacing an edge $xy$ with a path $xzy$, where $z$ is a new vertex. It is easy to see that $H_0$ is a triangle-free graph with
 $\left\lfloor\frac{(n-1)^2}{4}\right\rfloor+1$ edges, which shows that  Theorem~\ref{erdos2} is best possible.

In 1941, Tur\'{a}n \cite{turan1941} showed that if $G$ is a $K_{r+1}$-free graph on $n$ vertices then $e(G)\leq t_r(n)$, with equality if and only if $G=T_r(n)$. In 1981, Brouwer refined Tur\'{a}n's theorem in the following form, which is an extension of Theorem~\ref{erdos2}.

\begin{thm}[\cite{brouwer}]\label{brouwer}
Let G be a non-$r$-partite $K_{r+1}$-free graph on $n$ vertices.  Then
	$$e(G)\leq  t_r(n)-\left\lfloor \frac{n}{r}\right\rfloor +1.$$
\end{thm}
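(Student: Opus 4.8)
The plan is to prove the upper bound by induction on $n$ with $r\ge 2$ fixed, taking Theorem~\ref{erdos2} as the base case $r=2$ and noting that for $n<r+3$ there is no non-$r$-partite $K_{r+1}$-free graph at all (the smallest such graph needs a triangle-free odd cycle joined to a clique on the remaining parts), so the statement holds vacuously. For the inductive step, let $G$ be a non-$r$-partite $K_{r+1}$-free graph on $n$ vertices and let $v$ be a vertex of minimum degree $\delta=\delta(G)$. The argument splits according to whether $G-v$ remains non-$r$-partite.

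If $G-v$ is non-$r$-partite, then by the induction hypothesis $e(G-v)\le t_r(n-1)-\lfloor (n-1)/r\rfloor+1$, so $e(G)\le t_r(n-1)-\lfloor(n-1)/r\rfloor+1+\delta$. Since $t_r(n)-t_r(n-1)=(n-1)-\lfloor(n-1)/r\rfloor$, this already yields $e(G)\le t_r(n)-\lfloor n/r\rfloor+1$ provided $\delta\le n-1-\lfloor n/r\rfloor$, i.e. whenever the minimum degree stays below the maximum degree of $T_r(n)$. The complementary high-minimum-degree regime $\delta\ge n-\lfloor n/r\rfloor$ I would treat separately, showing that a $K_{r+1}$-free graph whose minimum degree is that close to $(1-1/r)n$ must in fact be $r$-partite (an Andr\'{a}sfai--Erd\H{o}s--S\'{o}s-type phenomenon, of which Theorem~\ref{erdos2} is the $r=2$ shadow), contradicting the assumption that $G$ is non-$r$-partite.

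The substantive case is when $G-v$ is $r$-partite; then $v$ is colour-critical. Fix a proper partition $U_1,\dots,U_r$ of $G-v$; since $G$ is non-$r$-partite, $v$ must have a neighbour in every $U_i$, so $d_i:=|N(v)\cap U_i|\ge1$. Let $k$ minimise $d_i$ and regard $U_1,\dots,U_k\cup\{v\},\dots,U_r$ as a partition of $V(G)$ into parts of sizes $n_1,\dots,n_r$. Writing $m_\times$ for the number of non-adjacent pairs lying in distinct parts and noting that the only within-part edges are the $d_k$ edges from $v$ to $U_k$, the identity $e(G)=\sum_{i<j}n_in_j-m_\times+d_k$ together with $\sum_{i<j}n_in_j\le t_r(n)$ gives $e(G)\le t_r(n)-m_\times+d_k$. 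It therefore suffices to prove $m_\times\ge d_k+\lfloor n/r\rfloor-1$. Here $K_{r+1}$-freeness is the engine: $G[N(v)]$ is $K_r$-free (a $K_r$ in $N(v)$ together with $v$ would form $K_{r+1}$), and since it is $r$-partite with parts of sizes $d_i$, Tur\'{a}n's theorem gives $e(G[N(v)])\le t_{r-1}(\delta)$, forcing at least $\sum_{i<j}d_id_j-t_{r-1}(\delta)$ missing cross-pairs already inside $N(v)$.

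The main obstacle I anticipate is pinning down the exact constant $\lfloor n/r\rfloor-1$. When $\deg(v)=\delta$ is large the deficit forced inside $N(v)$ is comfortably of order $\delta^2/r^2$ and dominates $\lfloor n/r\rfloor$, but when $\delta$ is small (for instance $d_i=1$ for all $i$) the local count yields only a constant number of missing cross-pairs, and one must instead locate the remaining $\approx n/r$ missing cross-edges among the vertices outside $N(v)$, or close the case directly through the crude bound $e(G)\le t_r(n-1)+\delta$. Reconciling these two regimes---and in particular handling near-balanced, Tur\'{a}n-like configurations where both the induction of the first case and the local deficit of the critical case are individually too weak---is the delicate heart of the proof, and is precisely where the linear-in-$n$ loss characteristic of Theorem~\ref{erdos2} gets quantified.
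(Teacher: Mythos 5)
First, note that the paper itself gives no proof of Theorem~\ref{brouwer}: it is quoted from \cite{brouwer}, so your proposal must stand on its own. Much of your scaffolding is in fact correct: the vacuity of the base case (no non-$r$-partite $K_{r+1}$-free graph exists on fewer than $r+3$ vertices), the identity $t_r(n)-t_r(n-1)=(n-1)-\lfloor (n-1)/r\rfloor$, and Case A, which does close completely --- either $\delta\le n-1-\lfloor n/r\rfloor$ and induction on $n$ finishes, or $\delta\ge n-\lfloor n/r\rfloor\ge \frac{r-1}{r}n>\frac{3r-4}{3r-1}n$, in which case the Andr\'{a}sfai--Erd\H{o}s--S\'{o}s theorem \cite{andrasfai} (a genuine theorem you may cite rather than re-prove) makes $G$ $r$-partite, a contradiction. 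Likewise, in Case B your reduction to showing $m_\times\ge d_k+\lfloor n/r\rfloor-1$ is a correct bookkeeping identity.

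The genuine gap is the one you yourself flag and then defer: the core of Case B is not proved, and your stated tools cannot prove it. The Tur\'{a}n deficit $\sum_{i<j}d_id_j-t_{r-1}(\delta)$ is vacuous whenever the $d_i$ are unbalanced, since then $\sum_{i<j}d_id_j$ is only linear in $\delta$ while $t_{r-1}(\delta)$ is quadratic, so the bound is negative exactly when $d_k$ is small --- which is when you need $m_\times\ge \lfloor n/r\rfloor$ the most. Concretely, for $r=3$ consider a putative configuration with $\delta\approx n/2$, $|U_1|\approx n/2$, $d_1=\delta-2$, $d_2=d_3=1$: the crude bound $e(G)\le t_r(n-1)+\delta$ needs $\delta\lesssim(1-\frac{2}{r})n=\frac{n}{3}$, AES needs $\delta>\frac{5n}{8}$, and the local deficit is negative, so none of your three mechanisms applies in the window $(1-\frac{2}{r})n<\delta<(1-\frac{1}{r})n$ with unbalanced $(d_i)$. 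The theorem is true, so such configurations must be impossible, but ruling them out requires locating the missing $\approx n/r$ cross-pairs \emph{outside} $N(v)$ using the global minimum-degree hypothesis --- an argument you never supply. A proposal whose ``delicate heart'' is acknowledged but unexecuted is a plan, not a proof. One standard way to actually close it is to work at a \emph{maximum}-degree vertex $u$ instead: either $G[N(u)]$ is non-$(r-1)$-partite and one inducts on $r$ (with $\Delta\ge 2e(G)/n$ large enough that the inherited loss $\lfloor \Delta/(r-1)\rfloor$ beats $\lfloor n/r\rfloor$), or $G[N(u)]$ is $(r-1)$-partite with parts $W_1,\dots,W_{r-1}$ and one plays the imbalance of the $W_i$ (which creates a Tur\'{a}n deficit inside $N(u)$) against missing transversal edges forced, via $K_{r+1}$-freeness, by an edge of $G$ outside $N(u)\cup\{u\}$; quantifying that trade-off is the real content of Brouwer's theorem, and it is absent from your write-up.
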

 This phenomenon was also studied in \cite{Amin}, \cite{Hanson}, \cite{kang} and \cite{Tyomkyn}. A proper {\it$k$-coloring} of  $G$ is a mapping  $\sigma : V(G)\to \{1, \cdots, k\}$ such that $\sigma(x)\neq\sigma(y)$ for every $xy\in E(G)$. The {\it chromatic
	number  $\chi(G)$} of $G$ is the minimum number $k$ such that $G$ has a proper $k$-coloring. We say that $G$ is a $k$-chromatic graph if $\chi(G) = k$.

Let us introduce the Gr\"{o}tzsch graph $\Gamma$ (as shown in Figure 1), which has 11 vertices and 20 edges. It was showed in \cite{chvatal} that $\Gamma$ has the fewest vertices among all triangle-free $4$-chromatic graphs.

\begin{figure}[H]
\centering
\ifpdf
  \setlength{\unitlength}{0.05 mm}%
  \begin{picture}(762.7, 591.4)(0,0)
  \put(0,0){\includegraphics{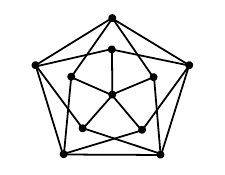}}
  \put(407.37,510.15){\fontsize{5.69}{6.83}\selectfont \makebox(60.0, 40.0)[l]{$u_1$\strut}}
  \put(662.68,356.13){\fontsize{5.69}{6.83}\selectfont \makebox(60.0, 40.0)[l]{$u_5$\strut}}
  \put(580.20,38.80){\fontsize{5.69}{6.83}\selectfont \makebox(60.0, 40.0)[l]{$u_4$\strut}}
  \put(135.92,46.31){\fontsize{5.69}{6.83}\selectfont \makebox(60.0, 40.0)[l]{$u_3$\strut}}
  \put(40.00,352.07){\fontsize{5.69}{6.83}\selectfont \makebox(60.0, 40.0)[l]{$u_2$\strut}}
  \put(353.73,446.32){\fontsize{5.69}{6.83}\selectfont \makebox(60.0, 40.0)[l]{$v_1$\strut}}
  \put(271.17,321.46){\fontsize{5.69}{6.83}\selectfont \makebox(60.0, 40.0)[l]{$v_2$\strut}}
  \put(522.74,340.08){\fontsize{5.69}{6.83}\selectfont \makebox(60.0, 40.0)[l]{$v_5$\strut}}
  \put(446.42,189.91){\fontsize{5.69}{6.83}\selectfont \makebox(60.0, 40.0)[l]{$v_4$\strut}}
  \put(258.39,190.17){\fontsize{5.69}{6.83}\selectfont \makebox(60.0, 40.0)[l]{$v_3$\strut}}
  \put(398.81,303.10){\fontsize{5.69}{6.83}\selectfont \makebox(20.0, 40.0)[l]{$w$\strut}}
  \end{picture}%
\else
  \setlength{\unitlength}{0.05 mm}%
  \begin{picture}(762.7, 591.4)(0,0)
  \put(0,0){\includegraphics{The-Grotzsch-graph}}
  \put(407.37,510.15){\fontsize{5.69}{6.83}\selectfont \makebox(60.0, 40.0)[l]{$u_1$\strut}}
  \put(662.68,356.13){\fontsize{5.69}{6.83}\selectfont \makebox(60.0, 40.0)[l]{$u_5$\strut}}
  \put(580.20,38.80){\fontsize{5.69}{6.83}\selectfont \makebox(60.0, 40.0)[l]{$u_4$\strut}}
  \put(135.92,46.31){\fontsize{5.69}{6.83}\selectfont \makebox(60.0, 40.0)[l]{$u_3$\strut}}
  \put(40.00,352.07){\fontsize{5.69}{6.83}\selectfont \makebox(60.0, 40.0)[l]{$u_2$\strut}}
  \put(353.73,446.32){\fontsize{5.69}{6.83}\selectfont \makebox(60.0, 40.0)[l]{$v_1$\strut}}
  \put(271.17,321.46){\fontsize{5.69}{6.83}\selectfont \makebox(60.0, 40.0)[l]{$v_2$\strut}}
  \put(522.74,340.08){\fontsize{5.69}{6.83}\selectfont \makebox(60.0, 40.0)[l]{$v_5$\strut}}
  \put(446.42,189.91){\fontsize{5.69}{6.83}\selectfont \makebox(60.0, 40.0)[l]{$v_4$\strut}}
  \put(258.39,190.17){\fontsize{5.69}{6.83}\selectfont \makebox(60.0, 40.0)[l]{$v_3$\strut}}
  \put(398.81,303.10){\fontsize{5.69}{6.83}\selectfont \makebox(20.0, 40.0)[l]{$w$\strut}}
  \end{picture}%
\fi
\caption[Figure 1]{The Gr\"{o}tzsch graph $\Gamma$. }\label{fig-1}
\end{figure}

Denote by $\mathcal{G}(n)$ the family of graphs obtained from $\Gamma$ by replacing each vertex $v_i$ (for $i=1, \cdots ,5$) and vertex $w$ with independent sets $V_i$ $(i=1, \cdots ,5)$ and $W$, respectively, such that $\sum_{i=1}^5|V_i|=\lfloor \frac{n-3}{2}  \rfloor$ (or $\lceil \frac{n-3}{2}  \rceil$), $|W|=\lceil \frac{n-7}{2}  \rceil$ (or $\lfloor \frac{n-7}{2}  \rfloor$). Two vertices in different independent sets are adjacent if and only if the corresponding original vertices in $\Gamma$ are adjacent (as shown in Figure 2). It is easy to check that each graph in $\mathcal{G}(n)$ is a triangle-free $4$-chromatic graph with $\lfloor\frac{(n-3)^2}{4}\rfloor+5$ edges.

\begin{figure}[H]
\centering
\ifpdf
  \setlength{\unitlength}{0.05 mm}%
  \begin{picture}(766.2, 592.8)(0,0)
  \put(0,0){\includegraphics{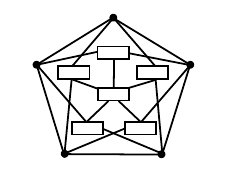}}
  \put(410.87,511.55){\fontsize{5.69}{6.83}\selectfont \makebox(60.0, 40.0)[l]{$u_1$\strut}}
  \put(666.19,357.53){\fontsize{5.69}{6.83}\selectfont \makebox(60.0, 40.0)[l]{$u_5$\strut}}
  \put(583.71,38.50){\fontsize{7.11}{8.54}\selectfont \makebox(75.0, 50.0)[l]{$u_4$\strut}}
  \put(139.28,44.12){\fontsize{5.69}{6.83}\selectfont \makebox(60.0, 40.0)[l]{$u_3$\strut}}
  \put(40.00,355.87){\fontsize{5.69}{6.83}\selectfont \makebox(60.0, 40.0)[l]{$u_2$\strut}}
  \put(358.95,444.06){\fontsize{4.55}{5.46}\selectfont \makebox(48.0, 32.0)[l]{$V_1$\strut}}
  \put(144.20,342.27){\fontsize{4.55}{5.46}\selectfont \makebox(48.0, 32.0)[l]{$V_2$\strut}}
  \put(569.48,341.04){\fontsize{4.55}{5.46}\selectfont \makebox(48.0, 32.0)[l]{$V_5$\strut}}
  \put(445.34,205.68){\fontsize{4.55}{5.46}\selectfont \makebox(48.0, 32.0)[l]{$V_4$\strut}}
  \put(265.33,205.24){\fontsize{4.55}{5.46}\selectfont \makebox(48.0, 32.0)[l]{$V_3$\strut}}
  \put(395.08,300.06){\fontsize{4.55}{5.46}\selectfont \makebox(16.0, 32.0)[l]{$W$\strut}}
  \end{picture}%
\else
  \setlength{\unitlength}{0.05 mm}%
  \begin{picture}(766.2, 592.8)(0,0)
  \put(0,0){\includegraphics{graphnew1}}
  \put(410.87,511.55){\fontsize{5.69}{6.83}\selectfont \makebox(60.0, 40.0)[l]{$u_1$\strut}}
  \put(666.19,357.53){\fontsize{5.69}{6.83}\selectfont \makebox(60.0, 40.0)[l]{$u_5$\strut}}
  \put(583.71,38.50){\fontsize{7.11}{8.54}\selectfont \makebox(75.0, 50.0)[l]{$u_4$\strut}}
  \put(139.28,44.12){\fontsize{5.69}{6.83}\selectfont \makebox(60.0, 40.0)[l]{$u_3$\strut}}
  \put(40.00,355.87){\fontsize{5.69}{6.83}\selectfont \makebox(60.0, 40.0)[l]{$u_2$\strut}}
  \put(358.95,444.06){\fontsize{4.55}{5.46}\selectfont \makebox(48.0, 32.0)[l]{$V_1$\strut}}
  \put(144.20,342.27){\fontsize{4.55}{5.46}\selectfont \makebox(48.0, 32.0)[l]{$V_2$\strut}}
  \put(569.48,341.04){\fontsize{4.55}{5.46}\selectfont \makebox(48.0, 32.0)[l]{$V_5$\strut}}
  \put(445.34,205.68){\fontsize{4.55}{5.46}\selectfont \makebox(48.0, 32.0)[l]{$V_4$\strut}}
  \put(265.33,205.24){\fontsize{4.55}{5.46}\selectfont \makebox(48.0, 32.0)[l]{$V_3$\strut}}
  \put(395.08,300.06){\fontsize{4.55}{5.46}\selectfont \makebox(16.0, 32.0)[l]{$W$\strut}}
  \end{picture}%
\fi
\caption[Figure 1]{The structure of graphs in $\mathcal{G}(n)$. }\label{fig-2}
\end{figure}

 The main result of this paper is the following theorem.

\begin{thm}\label{thm-1.8}
 Let $G$ be a graph on $n$ vertices with $n\geq 90$. If $G$ is triangle-free and $\chi(G)\geq 4$, then
 $$e(G)\leq \left\lfloor\frac{(n-3)^2}{4}\right\rfloor+5,$$
 with equality  if and only if $G\in \mathcal{G}(n)$ up to isomorphism.
 \end{thm}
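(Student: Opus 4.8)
My plan is to reduce the whole theorem to a single robust inequality about independent sets and then to isolate the unavoidable edge loss forced by $\chi(G)\ge 4$. Write $f(n)=\lfloor (n-3)^2/4\rfloor+5$ and suppose $G$ is triangle-free with $\chi(G)\ge 4$, $n\ge 90$ and $e(G)\ge f(n)$. For any independent set $A\subseteq V(G)$ put $B=V(G)\setminus A$ and $F=G[B]$. Since $G$ is triangle-free, for each $a\in A$ the neighbourhood $N(a)$ lies in $B$ and is independent in $F$, so $|N(a)|\le\alpha(F)$ with $\alpha$ the independence number; summing over $A$ and adding the edges inside $B$ gives the basic bound $e(G)=\sum_{a\in A}|N(a)|+e(F)\le |A|\,\alpha(F)+e(F)$, valid for every independent $A$. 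Colouring $A$ with a single fresh colour shows $\chi(G)\le 1+\chi(F)$, so $\chi(G)\ge 4$ forces $\chi(F)\ge 3$; as $G$ is triangle-free, $F$ contains an odd cycle but no triangle, whence any vertex cover of $F$ has at least three vertices and $\alpha(F)\le |B|-3$. Writing $|A|=a$, $|B|=n-a$ this yields $e(G)\le a(n-a-3)+e(F)\le \lfloor (n-3)^2/4\rfloor+e(F)$, the quadratic being maximised at $a=\lfloor(n-3)/2\rfloor$. Since $F$ contains an odd cycle it always has $e(F)\ge 5$, so the entire problem becomes: choose the independent set $A$ so that $e(F)=e(G-A)=5$, i.e.\ so that deleting a suitable independent set leaves exactly one $C_5$ plus isolated vertices, and then analyse the equality case.

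The heart of the argument is controlling the correction $e(F)$. Since $e(G)$ is within $O(n)$ of $\lfloor n^2/4\rfloor$, by stability $G$ is close to bipartite, and I would run a cleaning step — moving vertices with too few cross-neighbours, which is profitable precisely because each edge $b_1b_2$ inside $B$ forces $|N(b_1)\cap A|+|N(b_2)\cap A|\le|A|$ and hence costs about $n/2$ cross-edges — to produce an independent set $A$ with $e(G-A)$ bounded by an absolute constant. With $e(F)$ bounded I would then classify $F$: edge-maximality together with $\alpha(F)\le|B|-3$ forces the odd part to be a single $C_5$ on five vertices $u_1,\dots,u_5$, the remaining vertices of $B$ forming an independent set $W$; crucially, triangle-freeness forbids any edge between a $u_i$ and those of its neighbours that already lie in $A$, which pins $e(F)$ down to exactly $5$.

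Finally, a $3$-colouring analysis turns $\chi(G)\ge 4$ into the statement that the neighbourhoods $\{N(a):a\in A\}$ must destroy every proper $3$-colouring of $F$: in each such colouring some $a$ sees all three colours on $N(a)\subseteq\{u_1,\dots,u_5\}\cup W$. Optimising $e(G)=5+\sum_{a\in A}|N(a)|$ subject to this blocking condition and to triangle-freeness forces $A$ to split into five classes $V_1,\dots,V_5$ with $u_i$ complete to $V_{i-1}\cup V_{i+1}$ and $W$ complete to $A$; that is exactly the Gr\"otzsch blow-up, and balancing the class sizes yields equality if and only if $G\in\mathcal G(n)$.

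The main obstacle is the middle step: upgrading the weak (linear) stability estimate to a constant bound on $e(G-A)$, and then proving that every odd gadget other than the single $C_5$ attached in the Gr\"otzsch way either creates a triangle, drops the chromatic number below $4$, or sacrifices at least one cross-edge of weight $\Theta(n)$ and hence falls strictly below $f(n)$. It is exactly in balancing this $\Theta(n)$ cross-edge gain against the bounded gadget contributions that the hypothesis $n\ge 90$ enters, guaranteeing that the quadratic main term dominates all lower-order corrections.
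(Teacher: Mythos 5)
Your opening reduction is correct and is genuinely different from the paper's decomposition: for any independent set $A$, writing $B=V(G)\setminus A$ and $F=G[B]$, triangle-freeness gives $|N(a)|\le\alpha(F)$ for every $a\in A$, and $\chi(F)\ge\chi(G)-1\ge 3$ forces $F$ to contain an odd cycle of length at least $5$, hence $\tau(F)\ge 3$ and $\alpha(F)\le |B|-3$ by Gallai's identity; this yields $e(G)\le |A|\bigl(|B|-3\bigr)+e(F)\le \lfloor (n-3)^2/4\rfloor+e(F)$. So if you could always produce an independent $A$ with $e(G-A)\le 5$, the bound (and, with the blocking analysis, the equality case) would follow. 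The paper never argues this way: it removes a set $T$ of at most $15$ vertices to reach a bipartite graph (Lemma \ref{lem-2.3}, which rests on H\"aggkvist's minimum-degree theorem, Theorem \ref{thm-1.5}), proves the penalty bound $e(G)\le\lfloor(n-4)^2/4\rfloor+16$ when $d_2(G)\ge 4$ (Theorem \ref{thm-1.4}), and then handles $d_2(G)=3$ directly via the common-neighbourhood sets $A_{xz},B_{xz},A_{yz},B_{yz}$.

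The gap is your middle step, and it is not a small one. ``By stability $G$ is close to bipartite'' only gives edit distance $o(n^2)$ from off-the-shelf stability, nowhere near a constant; and the cleaning move (shifting vertices with few cross-neighbours) is a max-cut-type local search with no guarantee of terminating with $O(1)$ interior edges, nor of keeping $A$ independent. Even granting a constant bound, it does not suffice: your final inequality needs $e(F)\le 5$ \emph{exactly} --- if $e(F)=6$ you only get $\lfloor(n-3)^2/4\rfloor+6$. So you must show that any odd structure in $G-A$ beyond a single $C_5$ forces a compensating loss of at least $e(F)-5$ cross-edges, and that trade-off is precisely the hard quantitative content of the paper: Lemma \ref{lem-3.1} classifying triangle-free graphs with $\nu=3$ and $\tau\ge 4$, the matching analysis in Case 2 of Theorem \ref{thm-1.4}, and the correction term $-(|A_{xz}|+|A_{yz}|-2)(|B_{xz}|+|B_{yz}|-2)$ in the final count. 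In fact, the existence of an independent $A$ whose removal leaves exactly one $C_5$ plus isolated vertices is essentially equivalent to the structural conclusion $G\in\mathcal{G}(n)$ (or a subgraph thereof) that the theorem is trying to establish, so as written the proposal assumes a form of what is to be proved. The last step (the $3$-colouring blocking condition forcing the Gr\"otzsch blow-up) is plausible and could be made rigorous, but it only becomes available after the missing step is supplied.
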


Define
\[
d_2(G)=\min \left\{|T|\colon T\subseteq V(G),\ G-T \mbox{ is bipartite}\right\}.
\]

To prove Theorem~\ref{thm-1.8}, we need the following  vertex-stability result for Mantel's theorem.

\begin{thm}\label{thm-1.4}
 Let $G$ be a graph on $n$ vertices with $n\geq 90$. If $G$ is triangle-free and $d_2(G)\geq 4$, then
$$e(G)\leq  \left\lfloor\frac{(n-4)^2}{4}\right\rfloor+16.$$
 \end{thm}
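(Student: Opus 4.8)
The plan is to argue by contradiction. Suppose $G$ is triangle-free, $d_2(G)\ge 4$, and yet $e(G) > \lfloor (n-4)^2/4\rfloor +16$. It suffices to exhibit a set of at most three vertices whose deletion makes $G$ bipartite, since this would give $d_2(G)\le 3$, contradicting $d_2(G)\ge 4$. Write $t=\lfloor n^2/4\rfloor - e(G)$, so that $t<2n$. By a stability version of Mantel's theorem (Theorem~\ref{thm-1.1}), a triangle-free graph whose edge count is within $t$ of $\lfloor n^2/4\rfloor$ can be made bipartite by deleting at most $t$ edges; fixing a maximum cut $V(G)=X\cup Y$, the set of \emph{bad} edges (those inside $X$ or inside $Y$) then has size $b\le t<2n$. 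Because $|X|\,|Y|\ge e(G)-b\ge \lfloor n^2/4\rfloor-4n$, the cut is almost balanced, $\bigl||X|-|Y|\bigr|=O(\sqrt n)$. As $(X,Y)$ partitions all of $V(G)$, every odd cycle uses an odd number of bad edges, so any vertex cover of the bad edges meets every odd cycle; hence $d_2(G)\le \tau(\text{bad edges})$, and it remains to cover the bad edges with three vertices.

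\textbf{Localizing the bad edges.} Call a vertex \emph{deficient} if it has degree at most $3n/8$. Triangle-freeness is the key leverage: if $x_1x_2$ is a bad edge inside $X$, then $N(x_1)\cap N(x_2)=\varnothing$, so $\deg_Y(x_1)+\deg_Y(x_2)\le|Y|\le n/2$ and one endpoint, say $x_1$, has at most $n/4$ neighbours in $Y$. Either $x_1$ is deficient, or its remaining $\gtrsim n/8$ neighbours lie inside $X$, i.e.\ $x_1$ is incident to $\gtrsim n/8$ bad edges. Using the near-balance from Step~1, a counting argument bounds both populations by an absolute constant: each deficient vertex fails to meet $\gtrsim n/8$ of the $O(n)$ missing cross-edges, and each vertex of the second type uses up $\gtrsim n/8$ of the $b=O(n)$ bad edges. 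Consequently the bad edges are covered by a bounded number of vertices, which already yields $d_2(G)=O(1)$.

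\textbf{Sharpening to three.} The soft bound $d_2(G)=O(1)$ must now be pushed to the exact value $3$, which is where the real work lies. I would examine the bounded set $D$ of deficient vertices directly, reassigning each to the side of $(X,Y)$ holding the majority of its neighbours and recording the effect on both the bad-edge count and on $e(G)$. A vertex of $D$ that can be absorbed into the bipartition contributes no surviving odd cycle; an \emph{unabsorbable} one both carries bad edges and, by triangle-freeness, is missing $\gtrsim n/4$ potential cross-edges, so it depresses $e(G)$ by roughly a quarter of $n$ relative to the balanced complete bipartite graph. The heart of the argument is an accounting inequality showing that four or more unabsorbable deficient vertices, together with the odd ``gadget'' they must span, force $e(G)\le\lfloor (n-4)^2/4\rfloor+16$, contradicting our assumption. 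Hence at most three deficient vertices are unabsorbable; deleting them makes $G$ bipartite, so $d_2(G)\le 3$.

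The entire difficulty is concentrated in Step~3: upgrading a lossy $O(1)$ bound into the sharp constant $3$ with additive slack exactly $16$. This demands a precise, configuration-by-configuration edge count for the handful of deficient vertices and the small odd structure they span---ruling out every arrangement of four essential deficient vertices with $e(G)>\lfloor (n-4)^2/4\rfloor+16$---and it is also where the explicit hypothesis $n\ge 90$ is consumed, since the near-balance and near-completeness of $(X,Y)$ must dominate all the $O(\sqrt n)$ and $O(1)$ error terms. Steps~1 and~2, by contrast, are robust and deliberately wasteful, so the final constant is dictated by the fine analysis rather than by the approximate structure.
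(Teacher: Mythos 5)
Your Steps 1 and 2 are essentially sound and run parallel to the paper's Lemma~\ref{lem-2.3}: both produce a bounded set of vertices covering all ``bad'' edges (the paper gets $|T|\le 15$ via iterative deletion of low-degree vertices plus H\"{a}ggkvist's theorem; you get $O(1)$ via edge-stability plus a maximum cut --- note, though, that the edge-stability statement you invoke is a theorem of F\"{u}redi, not Theorem~\ref{thm-1.1}, which is only Mantel's bound). The genuine gap is Step~3, which you yourself identify as ``where the real work lies'': as written it is not a proof but a description of a proof you would attempt. There is no precise definition of ``unabsorbable'', the crucial ``accounting inequality'' is postulated rather than stated or proved, and the configuration analysis is deferred entirely; yet this step \emph{is} the theorem --- in the paper it occupies all of Section~3 together with the structural Lemma~\ref{lem-3.1} on triangle-free graphs with $\nu=3$ and $\tau\ge 4$.

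Worse, the quantitative mechanism you sketch cannot close the argument even in principle. Let $M$ be the number of missing cross-pairs of the cut $(X,Y)$ and $b$ the number of bad edges, so $e(G)\le |X||Y|-M+b\le \lfloor n^2/4\rfloor-M+b$, while the assumption to be contradicted is $e(G)>\lfloor (n-4)^2/4\rfloor+16=\lfloor n^2/4\rfloor-2n+20$; a contradiction therefore needs $M-b>2n-21$, a deficit of order $2n$. But a deficient vertex misses only about $n/8$ cross-pairs (even granting your claimed $n/4$), so four unabsorbable vertices account for at most roughly $n$ missing pairs --- short by a factor of $2$. In the paper the $2n$ deficit does not come from the few bad vertices themselves: it comes from playing the \emph{entire opposite side} of the cut against the odd structure spanned by the bad edges. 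Concretely, $\tau(H)\ge d_2(G)\ge 4$ and triangle-freeness force $\nu(H)\ge 3$, hence $H$ contains a $C_7$, a $C_5\cup K_{1,r}$, or a $4$-matching; since a vertex can be adjacent to at most $\lfloor k/2\rfloor$ vertices of an odd circuit on $k$ vertices in a triangle-free graph, e.g.\ a $C_7$ of bad edges inside $X^*$ costs at least $4|Y^*|\approx 2n$ missing cross-pairs, and the matching cases additionally require quadratic counts of the form $\bar{e}\ge a(a-2)$ coming from the fact that there are no edges between $N(u,X)$ and $N(u,Y)$. None of these ideas appears in your outline, and without them the per-vertex accounting you propose cannot reach the stated bound. (A secondary gap: your Step~3 deletes only deficient vertices, but your Step~2 cover also contains non-deficient vertices incident to many bad edges; that case must be ruled out separately before ``deleting the unabsorbable deficient vertices makes $G$ bipartite'' is legitimate.)
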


 Let $H_n$ be a graph obtained from a cycle $v_1v_2v_3v_4v_5v_1$ by replacing the vertices $v_1,v_2,v_3$
with three independent sets of size four each, and replacing $v_4$ and $v_5$ with two independent sets of size $\lfloor\frac{n}{2}\rfloor-6$ and $\lceil\frac{n}{2}\rceil-6$, respectively, and two vertices in blow-up sets are adjacent if and only if the original vertices they replace are adjacent.
It is easy to see that  $H_n$ cannot be made bipartite by deleting fewer than 4 vertices, that is, $d_2(H_n)=4$
and $e(H_n)=\lfloor\frac{(n-4)^2}{4}\rfloor+16$. Thus the upper bound given in Theorem \ref{thm-1.4} is sharp.

\section{Some useful lemmas}
We start with  some definitions.
For $S\subseteq{V(G)}$, let $G[S]$ denote the subgraph of $G$ induced by $S$. Let $G-S$ denote the  subgraph induced by $V(G)\backslash{S}$. For simplicity, we write $E(S)$ and $e(S)$ for $E(G[S])$ and $e(G[S])$, respectively. Let $N_G(v)$ denote the set of neighbors of $v$ in $G$. For $S\subseteq V(G)$,  let $N_G(v,S)$ denote the set of neighbors of $v$ in $S$. Let $\deg_G(v)=|N_G(v)|$ and  $\deg_S(v)=|N_G(v,S)|$.  The {\it minimum degree} $\delta (G)$ of $G$ is defined as the minimum of $\deg_G(v)$ over all $v\in V(G)$.   For two disjoint sets $S,T\subseteq V(G)$, let $G[S,T]$ denote the subgraph of $G$ with  vertex set $S\cup T$ and  edge set $\left\{xy\in E(G)\colon x\in S,\ y\in T\right\}$.  Let $e_G(S,T)=e(G[S,T])$.  If the context is clear, we often omit the subscript $G$.

In 1982, H{\"a}ggkvist \cite{Haggkvist} determined the structure of triangle-free graphs on $n$ vertices with minimum degree greater than $\frac{3n}{8}$.

\begin{thm}[\cite{Haggkvist}]\label{thm-1.5}
 Let $G$ be a triangle-free graph on $n$ vertices. If $\delta(G)>\frac{3}{8}n$, then $G$ is either a bipartite graph or a subgraph of a blow-up of $C_5$.
\end{thm}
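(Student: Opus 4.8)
The plan is to dispose of the bipartite case immediately and then, assuming $G$ is not bipartite, to produce a homomorphism from $G$ to $C_5$; this is exactly what it means for $G$ to be a subgraph of a blow-up of $C_5$ (partition $V(G)$ into classes $V_1,\dots,V_5$, map $V_i$ to the $i$-th vertex of $C_5$, and the homomorphism condition says edges run only between consecutive classes). Throughout I would lean on two elementary triangle-free facts: every neighborhood $N_G(v)$ is independent, and for every edge $uv$ the sets $N_G(u),N_G(v)$ are disjoint, so $\deg_G(u)+\deg_G(v)\le n$. In particular $\delta(G)>\tfrac38 n$ leaves a great deal of room.

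First I would locate a shortest odd cycle $C=v_1\cdots v_{2k+1}$ (which exists as $G$ is non-bipartite) and show it has length exactly $5$. Minimality forces $C$ to be chordless, and a short case check on arc parities shows that any vertex $u\notin V(C)$ adjacent to two cycle vertices $v_i,v_j$ must have them at cyclic distance exactly $2$ (otherwise one of the two cycles through $u$ would be odd and shorter than $C$); since no three vertices of an odd cycle of length $\ge 5$ are pairwise at distance $2$, every external vertex has at most two neighbors on $C$. Counting the edges between $V(C)$ and the rest then gives
\[
(2k+1)\Bigl(\tfrac38 n-2\Bigr)< e_G\bigl(V(C),V(G)\setminus V(C)\bigr)\le 2\bigl(n-(2k+1)\bigr),
\]
which simplifies to $2k+1<16/3$, hence $2k+1=5$.

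With $C=v_1v_2v_3v_4v_5$ fixed I would build the $C_5$-coloring. Reading indices mod $5$, set $\phi(v_i)=i$, and for a vertex $u$ adjacent to exactly $v_{i-1}$ and $v_{i+1}$ (the only admissible two-neighbor pattern) the label is forced to be $\phi(u)=i$; this already colors the \emph{core} classes $V_i=N_G(v_{i-1})\cap N_G(v_{i+1})$, which are pairwise disjoint by the at-most-two-neighbors property and satisfy $v_i\in V_i$. The remaining task is to extend $\phi$ to vertices having at most one neighbor on $C$ and to verify consistency: whenever $w\in N_G(u)$ is already labeled $\ell$, the independence of neighborhoods forces $\phi(u)\in\{\ell-1,\ell+1\}$, and $u$ is colorable precisely when these constraints from all labeled neighbors share a common value. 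Here I would use the minimum-degree hypothesis to estimate the sizes and overlaps of the neighborhoods $N_G(v_i)$, argue that every vertex is reached from the core, and --- most importantly --- rule out the one obstruction, namely a vertex $u$ with two neighbors whose forced labels are incompatible (for instance labels $1$ and $2$, giving $\{5,2\}\cap\{1,3\}=\varnothing$).

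The main obstacle is exactly this last consistency step. Triangle-freeness alone does not prevent a vertex from seeing two neighbors with non-adjacent labels; the work is to show that under $\delta(G)>\tfrac38 n$ such a configuration would create either a triangle or an odd cycle shorter than $5$, or violate the disjointness $\deg_G(u)+\deg_G(v)\le n$, so that the partition into label classes is forced and all edges indeed run between consecutive classes. I expect the threshold $3/8$ to be used twice: once quantitatively in the length-$5$ count above, and once more decisively in this overlap/consistency analysis, matching the sharpness exhibited by blow-ups of $C_5$ with parts just below this density.
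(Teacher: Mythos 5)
This theorem is not proved in the paper at all: it is H\"aggkvist's theorem, quoted from the cited 1982 article, so there is no internal proof to compare against and your attempt has to stand on its own. Its first stage does: the chordlessness of a shortest odd cycle $C=v_1\cdots v_{2k+1}$, the parity argument showing that an external vertex adjacent to two vertices of $C$ must see them at cyclic distance exactly $2$, the conclusion that every vertex has at most two neighbours on $C$, and the count $(2k+1)\bigl(\tfrac38 n-2\bigr)<2\bigl(n-(2k+1)\bigr)$, which indeed forces $2k+1=5$. That half is correct and complete.

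Everything after that, however, is a plan rather than a proof, and the hole sits exactly where the theorem's content lies; you acknowledge this yourself (``the main obstacle is exactly this last consistency step'', ``I expect the threshold $3/8$ to be used\dots''). Worse, the mechanisms you propose to close it --- producing a triangle, an odd cycle shorter than $5$, or a violation of $\deg u+\deg v\le n$ --- cannot succeed, because the offending configuration is \emph{locally} consistent: a vertex adjacent to two consecutive core classes $W_i=N(v_{i-1})\cap N(v_{i+1})$ and $W_{i+1}$ occurs inside the Wagner graph $V_8$ (the M\"obius ladder on $8$ vertices), which is triangle-free with odd girth $5$, has $\delta=\tfrac38 n$ exactly, and admits no homomorphism to $C_5$; so no parity or degree-sum argument local to the configuration can exclude it, and the strict inequality $\delta>\tfrac38 n$ must enter through a global count. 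The standard way to do this, which is what your sketch is missing: since every vertex has at most two neighbours on $C$, $\sum_{i=1}^{5}|N(v_i)|>\tfrac{15n}{8}$ forces the core $W=W_1\cup\cdots\cup W_5$ to satisfy $|W|>\tfrac{7n}{8}$, leaving $|R|<\tfrac n8$ vertices outside it; and for any two vertices $x,y$ with a common neighbour $u$, triangle-freeness gives $N(x),N(y)\subseteq V\setminus N(u)$, hence $|N(x)\cap N(y)|\ge \deg x+\deg y-(n-\deg u)>\tfrac n8$. Now if some $x\in W_i$ and $y\in W_{i+1}$ had a common neighbour, triangle-freeness confines the core neighbours of $x$ to $W_{i-1}\cup W_{i+1}$ and those of $y$ to $W_i\cup W_{i+2}$, which are disjoint, so all of the more than $\tfrac n8$ common neighbours of $x$ and $y$ would have to lie in $R$ --- contradicting $|R|<\tfrac n8$. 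This lemma, applied again to attach the vertices of $R$ to the classes and to check consistency of edges inside $R$, is the engine of the whole proof; without it, your proposal establishes only that the odd girth is $5$, which is strictly weaker than the theorem (again, $V_8$ witnesses the difference).
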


\begin{lem}\label{LEMMA:matrix-d-regular}
Suppose that $z_1,z_2,\ldots,z_5$ are positive reals satisfying $z_1+z_2+\cdots+z_5=z$ and $\min_{1\leq i\leq 5}z_i \geq z_0$ for some  $z_0 \ge 0$.
    Then
    \begin{align*}
        z_1z_3+z_2z_4+ z_3z_5+z_4z_1+z_5z_2
        \ge  (2z -  5z_0)z_0.
    \end{align*}
\end{lem}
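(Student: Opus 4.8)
The plan is to exploit the combinatorial structure of the five products on the left-hand side rather than to attack the (indefinite) quadratic form directly. Ordering the indices so that consecutive terms share a variable, the unordered pairs that appear are $\{1,3\},\{3,5\},\{5,2\},\{2,4\},\{4,1\}$, which are exactly the edges of the cycle $1\,3\,5\,2\,4\,1$ — the ``pentagram'', i.e.\ the complement of the pentagon $1\,2\,3\,4\,5\,1$. The decisive feature is that this graph is $2$\nobreakdash-regular: each index $i$ occurs in precisely two of the five products. It is this regularity, and not any convexity property, that makes the argument collapse to a one-line estimate.

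First I would perform the shift $z_i = z_0 + x_i$, where $x_i = z_i - z_0 \ge 0$ by the hypothesis $\min_i z_i \ge z_0$. Writing $E$ for the set of the five pairs above and $Q=\sum_{\{i,j\}\in E} z_i z_j$ for the left-hand side, expansion gives
\[
Q = \sum_{\{i,j\}\in E} (z_0 + x_i)(z_0 + x_j)
   = 5z_0^2 + z_0\!\!\sum_{\{i,j\}\in E}(x_i + x_j) + \sum_{\{i,j\}\in E} x_i x_j ,
\]
using $|E|=5$. The next step is to evaluate the middle term by $2$\nobreakdash-regularity: since every index appears in exactly two pairs, $\sum_{\{i,j\}\in E}(x_i + x_j) = 2\sum_{i=1}^{5} x_i = 2(z-5z_0)$, because $\sum_i x_i = \big(\sum_i z_i\big) - 5z_0 = z - 5z_0$.

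Substituting this in yields
\[
Q = 5z_0^2 + 2z_0(z - 5z_0) + \sum_{\{i,j\}\in E} x_i x_j
  = (2z - 5z_0)z_0 + \sum_{\{i,j\}\in E} x_i x_j .
\]
Since all $x_i \ge 0$, the remaining sum $\sum_{\{i,j\}\in E} x_i x_j$ is nonnegative, and the desired inequality $Q \ge (2z - 5z_0)z_0$ follows immediately. (Equality holds precisely when $x_i x_j = 0$ for every $\{i,j\}\in E$, i.e.\ when the set of indices with $z_i > z_0$ is an independent set of the pentagram.)

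I do not expect a serious obstacle here. Once the change of variables $z_i \mapsto z_0 + x_i$ is in place, the $2$\nobreakdash-regularity of the pentagram forces the linear part to be exactly $2z_0(z-5z_0)$, and the only leftover term is a manifestly nonnegative sum of products. The single point that must be stated with care is the identification of the five products with the edge set of a $2$\nobreakdash-regular graph, as this is what drives the cancellation; a more mechanical route through Lagrange multipliers or a case analysis on the faces of the feasible polytope would be far more cumbersome and is unnecessary.
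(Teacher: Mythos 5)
Your proof is correct and is essentially the same argument as the paper's: both substitute $z_i = z_0 + \xi_i$, use the $2$-regularity of the pentagram to identify the linear contribution $2z_0(z-5z_0)$ (the paper does this after doubling the sum and substituting in one factor only, you by full expansion), and then discard exactly the same nonnegative remainder $\sum_{\{i,j\}\in E}\xi_i\xi_j$. No gap; your version also correctly records the equality condition, which the paper does not need.
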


\begin{proof}
Let $z_i=z_0+\xi_i$, $i=1,2,\ldots,5$. Then
\begin{align*}
&2(z_1z_3+z_2z_4+ z_3z_5+z_4z_1+z_5z_2)\\[3pt]
=&z_1(z_3+z_4)+z_2(z_4+z_5)+z_3(z_1+z_5)+z_4(z_1+z_2)+z_5(z_2+z_3)\\[3pt]
=&\xi_1(z_3+z_4)+\xi_2(z_4+z_5)+\xi_3(z_1+z_5)+\xi_4(z_1+z_2)+\xi_5(z_2+z_3)+2z_0z\\[3pt]
\geq &2(\xi_1+\xi_2+\cdots+\xi_5)z_0+2z_0z\\[3pt]
= & 2(z-5z_0)z_0+2z_0z\\[3pt]
=&2(2z-5z_0)z_0.
\end{align*}
\end{proof}

Now we prove the following lemma, which will be used  to prove Theorem \ref{thm-1.4}.

\begin{lem}\label{lem-2.3}
Let $G$ be a triangle-free  graph on $n$ vertices with $e(G)\geq \frac{(n-4)^2}{4}+16$ and $n\geq 90$. Then there exists $T\subseteq V(G)$ with $|T| \leq 15$ such that $G-T$ is a bipartite graph. Moreover, for any $S\subset T$,
\begin{align}\label{ineq-key1000}
\frac{(n-|S|-4)^2}{4}+\sum_{v\in S} \deg(v) \leq  \frac{(n-4)^2}{4}.
\end{align}
\end{lem}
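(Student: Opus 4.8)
The plan is to prove the two assertions separately: first I would produce the set $T$ by a greedy peeling argument, and then read off the inequality \eqref{ineq-key1000} directly from the way $T$ is built. Set $G_0=G$ and, having formed $G_{i-1}$ on $n_{i-1}=n-(i-1)$ vertices, delete a minimum-degree vertex $v_i$ as long as $\deg_{G_{i-1}}(v_i)\le \frac{2n_{i-1}-9}{4}$; otherwise stop. The threshold is chosen so that $\frac{(m-4)^2}{4}+16$ drops by exactly $\frac{2m-9}{4}$ when $m$ decreases by $1$ (since $(m-4)^2-(m-5)^2=2m-9$), so the invariant $e(G_i)\ge \frac{(n_i-4)^2}{4}+16$ is preserved throughout. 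When the process halts at a graph $G^{*}$ on $n^{*}$ vertices we have $\delta(G^{*})>\frac{2n^{*}-9}{4}>\frac{3n^{*}}{8}$ for $n^{*}\ge 19$, so Theorem~\ref{thm-1.5} applies and $G^{*}$ is either bipartite or a subgraph of a blow-up of $C_5$.

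In the bipartite case $T$ is simply the set of deleted vertices. In the $C_5$-blow-up case, with part sizes $w_1,\dots,w_5$ summing to $n^{*}$, the surviving edges satisfy $\sum_i w_iw_{i+1}\ge \frac{(n^{*}-4)^2}{4}+16$; since $\sum_i w_iw_{i+1}\le \frac{(n^{*})^2}{4}$ with equality only when two adjacent parts carry all the mass, this near-extremal density forces the three minor parts to be tiny. I would make this quantitative using Lemma~\ref{LEMMA:matrix-d-regular} together with the identity $\sum_i w_iw_{i+1}+\sum_i w_iw_{i+2}=\tfrac12\bigl((n^{*})^2-\sum_i w_i^2\bigr)$, concluding that the smallest part has bounded size; deleting it makes $G^{*}$ bipartite and enlarges $T$ by $O(1)$. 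Summing the two contributions is intended to give $|T|\le 15$.

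The main obstacle is controlling the \emph{total} number of deleted vertices by the absolute constant $15$. Peeling alone is delicate: if $G$ has a large sparse side, the greedy process could strip off linearly many vertices even when $d_2(G)$ is small. I expect to resolve this by coupling the edge bookkeeping with the density invariant: deleting $v_i$ removes at most $\frac{2n_{i-1}-9}{4}$ edges, so after $k$ deletions $e(G^{*})\ge e(G)-\frac{k(2n-k-8)}{4}$, and comparing this with the bipartite (or near-degenerate $C_5$) upper bound for $e(G^{*})$ confines $k$ either to a small range or to a degenerate regime $k\gtrsim n/2$. The latter must then be excluded by a direct structural inspection, showing that a graph with that many low-degree vertices cannot simultaneously meet the edge hypothesis and force a large $d_2$. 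This interplay is where I expect the bulk of the work to lie.

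For the moreover inequality I would exploit that the deletions are \emph{graded}: the $i$-th deleted vertex satisfies $\deg(v_i)\le \frac{2(n-i+1)-9}{4}$. Ordering $T=\{v_1,\dots,v_t\}$ by deletion time and using the identity $(n-4)^2-(n-s-4)^2=s(2n-s-8)$, a telescoping sum gives $\sum_{i=1}^{s}\deg(v_i)\le \frac{s(2n-s-8)}{4}$ for every prefix; and because the per-step bound $\frac{2(n-i+1)-9}{4}$ is decreasing in $i$, a majorization argument promotes this from prefixes to an arbitrary $S\subset T$. Rearranging then yields exactly \eqref{ineq-key1000}. The one point needing care is that $\deg$ here is the degree in $G$, whereas the peeling controls the degree at deletion time; these agree once $T$ is taken to be independent, which the construction can be arranged to ensure, and otherwise differ only by the internal edges $e(G[T])$, which I would absorb into the slack between $15$ and the extremal value $4$.
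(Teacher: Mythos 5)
Your overall skeleton (greedy peeling, then Theorem~\ref{thm-1.5} plus Lemma~\ref{LEMMA:matrix-d-regular} to exclude the $C_5$-blow-up outcome) is the same as the paper's, but your choice of peeling threshold $\frac{2n_{i-1}-9}{4}\approx\frac{n_{i-1}}{2}$ creates a genuine gap that the rest of the proposal does not repair. That threshold is the exact break-even rate at which $\frac{(m-4)^2}{4}+16$ drops per deleted vertex, so deletions cost nothing against your invariant --- but for the same reason they cost nothing against Mantel's bound either, and no absolute bound on $|T|$ follows. Quantitatively, your bookkeeping gives $e(G^*)\ge e(G)-\frac{k(2n-k-8)}{4}$ and the bipartite bound gives $e(G^*)\le\frac{(n-k)^2}{4}$; combined with $e(G)\ge\frac{(n-4)^2}{4}+16$ these yield only $k\le n-10$, not ``a small range or $k\gtrsim n/2$''. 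A concrete counterexample: for $n=100$ take $G=K_{37,63}$, which has $2331>\frac{96^2}{4}+16=2320$ edges; every vertex on the $63$-side has degree $37\le\frac{2\cdot 100-9}{4}$, and your process keeps deleting such vertices until $K_{37,41}$ remains, outputting $|T|=22>15$ even though $G$ is already bipartite. The paper's threshold $\frac{3}{8}(n-i)$ does double duty: it matches the hypothesis of Theorem~\ref{thm-1.5} exactly, while sitting a full $\frac{1}{8}(n-i)$ below the break-even rate, so each deletion creates a deficit of order $n/8$ against Mantel's bound and $16$ deletions already exceed the total slack of roughly $2n-20$, which forces $|T|\le 15$.

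The same zero-slack problem breaks your derivation of \eqref{ineq-key1000}. With your threshold, the prefix sums of deletion-time degrees satisfy $\sum_{i\le s}\deg_{G_{i-1}}(v_i)\le\frac{s(2n-s-8)}{4}$, which equals the allowance $\frac{(n-4)^2-(n-s-4)^2}{4}$ exactly; there is no room left to absorb the edges from $v_j$ back into $\{v_1,\dots,v_{j-1}\}$, which can add as much as $\binom{s}{2}$ to $\sum_{v\in S}\deg_G(v)$. Your suggestion to ``arrange $T$ to be independent'' is not something the greedy construction provides, and ``absorbing into the slack between $15$ and $4$'' is not meaningful here: \eqref{ineq-key1000} has no additive constant to spend. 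In the paper the $\frac18$-gap again supplies exactly the needed room: one bounds $\deg_G(v_j)\le\frac{3}{8}(n-j+1)+(j-1)$, sums the largest $|S|$ of these bounds, and checks that the total stays below $\frac{(n-4)^2-(n-|S|-4)^2}{4}$ once $n\ge 90$. (A further, minor, issue: your application of Theorem~\ref{thm-1.5} needs $n^*\ge 19$, but your invariant together with Mantel's theorem only guarantees $n^*\ge 10$; in the paper this never arises because $|T|\le 15$ is established first.) So the fix is not more bookkeeping at your threshold, but lowering the threshold to $\frac{3}{8}(n-i)$, after which both halves of the lemma follow by the paper's computations.
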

\begin{proof}
We obtain an induced bipartite subgraph from $G$ by a standard vertex-deletion  procedure. Set $G_0=G$. Let us start with $i=0$ and obtain a sequence of graphs $G_1,G_2,\ldots,G_\ell,\ldots$ as follows: In the $i$th step, if there exists $v_{i+1}\in V(G_i)$ satisfying  $\deg_{ G_i}(v_{i+1})\leq \frac{3}{8}(n-i)$ then let $G_{i+1}$ be the graph obtained by deleting $v_{i+1}$ from $G_i$, and proceed to the $(i+1)$-th step. Otherwise we stop and set $G^*=G_i$ and $T=\{v_1,v_2,\ldots,v_i\}$.  Clearly, the procedure will terminate after finitely many steps.
 Suppose to the contrary that $|T|\geq 16$.  Since $G_{16}$ is triangle-free, by  Theorem \ref{thm-1.1} we have $e(G_{16})\leq \frac{(n-16)^2}{4}$. It follows that
\begin{align*}
e(G)\leq e(G_{16})+\sum_{i=0}^{15} \frac{3}{8}(n-i)
&\leq\frac{(n-16)^2}{4}+ \frac{3}{16}(2n-15)\times 16\leq \frac{(n-4)^2}{4}+15,
\end{align*}
a contradiction. Thus $|T|\leq 15$.

For each $v_j\in T$, we have
\[
\deg(v_j)=\deg_{G_{j-1}}(v_j)+\deg_{T}(v_j)\leq \frac{3}{8}(n-j+1)+j-1=\frac{3}{8}n+\frac{5}{8}(j-1).
\]
Thus for any $S\subset T$,
\begin{align*}
\sum_{v\in S} \deg(v) \leq \sum_{0\leq j\leq |S|-1} \left(\frac{3}{8}n+\frac{5}{8}(14-j)\right) = \frac{3}{8}n|S| +\frac{5}{16} |S|(29-|S|).
\end{align*}
Note that
\begin{align*}
\frac{(n-|S|-4)^2}{4}+\sum_{v\in S} \deg(v)&\leq \frac{(n-4)^2}{4} +\frac{1}{4} |S| ( |S|- 2 n + 8 )+ \frac{3}{8}n|S| +\frac{5}{16} |S|(29-|S|)\\[3pt]
&=\frac{(n-4)^2}{4}-|S|\left(\frac{n}{8}+\frac{|S|}{16}-\frac{177}{16}\right)\\[3pt]
&\leq \frac{(n-4)^2}{4},
\end{align*}
where the last inequality holds  for $n\geq 90$.
Thus  \eqref{ineq-key1000} holds.

Note that  $\delta(G^*)\geq \frac{3}{8}(n-|T|)$. By Theorem \ref{thm-1.5} we conclude that  $G^*$ is either bipartite or a subgraph of blow-up of $C_5$. Suppose that the latter case holds.  Let $D_1,D_2,\ldots,D_5$ be the partition of $V(G^*)$ such that each $D_i$ is an independent set of $G^*$ and  each edge of $G^*$ has one endpoint in  $D_i$ and the other in $D_{i+1}$ for some $i\in [5]$, where indices are taken modulo 5.

Let $t=|T|\leq 15$. Note that for $n\geq 45$,
\begin{align*}
e(G^*)\geq e(G)-\sum_{i=1}^{t}\deg_{G_i}(v_i)&\geq \frac{(n-4)^2}{4}+16-\sum_{i=0}^{t-1}\frac{3}{8}(n-i)\\[3pt]
&=\frac{(n-t)^2}{4}-\frac{(16-t)n}{8}-\frac{t(t+3)}{16}+20\\[3pt]
&>\frac{13(n-t)^2}{64}+\frac{3(n-t)^2}{64}-\frac{(16-t)n}{8}\\[3pt]
&>\frac{13(n-t)^2}{64}
\end{align*}
and $\sum_{i\in [5]}|D_i|=n-t$.
It follows that
\begin{align}\label{ineq-2.1}
\sum_{i\in [5]}|D_i||D_{i+1}| \geq e(G^*)>\frac{13}{64}(n-t)^2.
\end{align}
Since the minimum degree is at least $\frac{3}{8}(n-t)$ in $G^*$, we infer that
\begin{align}\label{ineq-2.2}
|D_{i-1}| + |D_{i+1}|\geq \frac{3}{8}(n-t)
     \quad\text{for each}\quad i=1,2,3,4,5.
    \end{align}
    Let $z_i \coloneqq |D_{i-1}| + |D_{i+1}|$, $i=1,2,3,4,5$.
 Since $\sum_{1\leq i\leq 5} z_i=2(n-t)$, applying Lemma~\ref{LEMMA:matrix-d-regular}  with $z_0=\frac{3}{8}(n-t)$ and $z=2(n-t)$ we obtain that
    \begin{align}\label{equ:opt1-e}
       \sum_{1\leq i\leq 5} z_iz_{i+2}
        \ge  (2z - 5z_0)z_0 =\frac{51}{64}(n-t)^2.
    \end{align}
It follows that
\begin{align*}
\sum_{i\in [5]}|D_i||D_{i+1}|
        = \left(\sum_{i\in [5]}|D_i|\right)^2-\sum_{i\in [5]} z_i z_{i+2}
        & = \left(n-t\right)^2- \sum_{i\in [5]} z_i z_{i+2}\leq \frac{13}{64}(n-t)^2,
\end{align*}
which contradicts~\eqref{ineq-2.1}.   Thus $G^*=G-T$ is a bipartite graph. This completes the proof of Lemma \ref{lem-2.3}.
\end{proof}
For a graph $G$, a collection of vertex-disjoint edges in $G$ is called a {\it matching} of $G$. The {\it matching number}, denoted by $\nu (G)$, is defined as the number of edges of  a maximum matching in $G$. A {\it covering} of $G$ is a subset $K$ of $V$ such that every edge of $G$ has an endpoint in $K$. The {\it covering number}, denoted by $\tau(G)$, is defined as the  number of vertices of  a minimum covering in $G$. Denote by $G\cup H$ the vertex-disjoint union of $G$ and $H$.

The next lemma determines the structure of a triangle-free graph $H$ with $\tau(H)\geq 4$ and $\nu(H)=3$.

\begin{lem}\label{lem-3.1}
Let $H$ be a triangle-free graph without isolated vertices.  If $\nu(H)=3$ and $\tau(H)\geq 4$, then $H$ is either $(i)$ $H$ is a $C_7$ or $(ii)$  $|V(H)|=r+6$ and $H$ contains a $C_5\cup K_{1,r}$ as a subgraph.
\end{lem}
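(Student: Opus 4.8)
The plan is to first pin down which odd cycles are available in $H$, then split according to the shortest one. Since $\tau(H)\geq 4>3=\nu(H)$, the graph $H$ is not K\"onig, hence not bipartite, so being triangle-free it contains an odd cycle of length at least $5$. Moreover no odd cycle can be longer than $7$: a $C_{2k+1}$ has matching number $k$, so a subgraph $C_9$ or longer would force $\nu(H)\geq 4$. Thus the shortest odd cycle of $H$ is a $C_5$ or a $C_7$, and I treat these cases separately.

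Suppose first that $H$ contains a copy of $C_7=x_1x_2\cdots x_7x_1$. If some vertex $y\notin V(C_7)$ has a neighbour $z$, then either $z\in V(C_7)$, in which case $yz$ together with a perfect matching of the path $C_7-z$ (three edges) gives a matching of size $4$; or $z\notin V(C_7)$, in which case $yz$ together with a size-$3$ matching of $C_7$ again gives $\nu(H)\geq 4$. Both contradict $\nu(H)=3$, so $V(H)=V(C_7)$. Triangle-freeness forces every chord to join vertices at cyclic distance exactly $3$ (distance $2$ would create a triangle). If there is no chord, then $H=C_7$ and we are in case $(i)$. If there is a chord, say $x_1x_4$, then $x_1x_4x_5x_6x_7x_1$ is a $C_5$ and $x_2x_3$ is a disjoint edge, so $H$ contains a spanning $C_5\cup K_{1,1}$ with $r=1$ and $|V(H)|=7=r+6$, which is case $(ii)$.

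Now assume the shortest odd cycle is a $C_5$. The heart of the argument is the following claim: $H$ contains a $C_5$ together with an edge vertex-disjoint from it, i.e.\ a copy of $C_5\cup K_2$. Granting the claim, fix such a $C_5$, call it $C$, and put $R=V(H)\setminus V(C)$. If $R$ contained two disjoint edges they would extend a size-$2$ matching of $C$ to a matching of size $4$; hence $\nu(H[R])\leq 1$, and since $H$ is triangle-free this forces $H[R]$ to be a single star $K_{1,s}$ (with $s\geq 1$, as $R$ already carries one edge) together with some vertices isolated in $H[R]$. If $w\in R$ were isolated in $H[R]$, then, $H$ having no isolated vertices, $w$ would have a neighbour $v_i\in V(C)$, and the edge $wv_i$, two edges of the path $C-v_i$, and an edge of the star would form a matching of size $4$, a contradiction. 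Hence $H[R]$ has no isolated vertices, so $H[R]=K_{1,r}$ with $r=|R|-1$, and $C$ together with $H[R]$ realises the required spanning $C_5\cup K_{1,r}$ with $|V(H)|=5+(r+1)=r+6$, giving case $(ii)$.

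It remains to prove the claim, which is where the real work lies. I would start from a maximum matching $M=\{a_1b_1,a_2b_2,a_3b_3\}$ and let $U$ be the unmatched set; as $M$ is maximum, $U$ is independent and every $u\in U$ has all its neighbours in $V(M)$. For each $i$, at most one of $a_i,b_i$ has a neighbour in $U$: if $u\sim a_i$ and $u'\sim b_i$ with $u\neq u'$ then $ua_ib_iu'$ is an augmenting path, while $u=u'$ yields a triangle. Relabel so that each $b_i$ has no neighbour in $U$; then $\{a_1,a_2,a_3\}$ covers every edge except possibly one among $\{b_1,b_2,b_3\}$, and since $\tau(H)\geq 4$ this size-$3$ set is not a cover, so some $b_ib_j$, say $b_1b_2$, is an edge. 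Triangle-freeness gives $b_1\notin N(a_2)$ and $b_2\notin N(a_1)$. If both $a_1$ and $a_2$ have a neighbour in $U$, the absence of an augmenting path forces them to share a common neighbour $u\in U$, and then $ua_1b_1b_2a_2$ is a $C_5$ disjoint from the matching edge $a_3b_3$, proving the claim. The main obstacle is the residual situation in which one of $a_1,a_2$ — say $a_2$ — has no neighbour in $U$, so that $a_1b_1b_2a_2$ need not close up. Here I expect to argue by re-selecting the $C_5$: a vertex with two neighbours on the current $C_5$ (necessarily at cyclic distance $2$) lets one swap a cycle vertex out and thereby expose a fresh edge in the complement, while $\tau(H)\geq 4$ together with $\nu(H)=3$ is used to exclude the configurations in which no such swap is available. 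This re-selection, together with the bookkeeping of all attachment patterns of $U$ to $V(M)$, is the technically delicate part of the proof.
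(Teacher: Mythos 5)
Your overall architecture coincides with the paper's: reduce to the shortest odd cycle having length $5$ or $7$, settle the $C_7$ case, and in the $C_5$ case reduce everything to the single claim that $H$ contains a five-cycle together with a vertex-disjoint edge, from which the spanning $C_5\cup K_{1,r}$ structure follows. Your $C_7$ case and your derivation of case $(ii)$ from the claim are both correct. The problem is that the claim itself---which you correctly identify as ``where the real work lies''---is not proved. Your matching argument (maximum matching $M=\{a_1b_1,a_2b_2,a_3b_3\}$, independent unmatched set $U$, relabelling so that no $b_i$ sees $U$, forcing an edge $b_1b_2$) only closes a five-cycle when both $a_1$ and $a_2$ have neighbours in $U$; in the residual case you offer an expectation (``I expect to argue by re-selecting the $C_5$ \ldots'') rather than an argument. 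This gap is essential, not cosmetic: nothing in your construction up to that point uses the case hypothesis that $H$ actually contains a $C_5$, and without that hypothesis the claim is false. Indeed $H=C_7$ satisfies all hypotheses of the lemma, realizes exactly your residual configuration (take $C_7=x_1x_2\cdots x_7x_1$, $M=\{x_1x_2,x_3x_4,x_5x_6\}$, $U=\{x_7\}$, $b_1=x_2$, $b_2=x_3$, $a_2=x_4$, so $b_1b_2\in E(H)$ and $a_2$ has no neighbour in $U$), and contains no $C_5\cup K_2$ whatsoever. So no amount of further bookkeeping of how $U$ attaches to $V(M)$ can finish the residual case; any correct completion must genuinely invoke the assumed five-cycle and carry out the swap you only gesture at.

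For the record, the paper closes precisely this point with a short covering argument run on the assumed five-cycle $C=v_1v_2v_3v_4v_5v_1$ rather than on a maximum matching. If no edge of $H$ is disjoint from $V(C)$, then $V(H)\setminus V(C)$ is independent and (chords being impossible in a triangle-free $C_5$) every edge meets $V(C)$, so $V(C)$ is a cover. Since $\tau(H)\geq 4$, none of the three sets $\{v_1,v_3,v_5\}$, $\{v_2,v_4,v_5\}$, $\{v_1,v_2,v_4\}$ is a cover, which successively produces pendant edges $v_2a$, then $v_1b$ with $b\neq a$ (triangle-freeness), then $v_3c$; and $\nu(H)=3$ forces $c=b$, since $a,b,c$ pairwise distinct would give the matching $\{v_2a,\,v_1b,\,v_3c,\,v_4v_5\}$ of size $4$, while $c=a$ gives the triangle $v_2v_3a$. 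Now $v_1bv_3v_4v_5v_1$ is a five-cycle and $v_2a$ is an edge disjoint from it, landing you back in the case you have already handled. Splicing this argument (or a completed version of your swap idea) into your residual case would make your proof go through; as written, the hardest case of the lemma is asserted rather than proved.
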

\begin{proof}
Since $\tau(H)>\nu(H)$, we infer that $H$ is non-bipartite. Let $C$ be the shortest odd cycle in $H$ and let $\ell$ be the length of  $C$. Since $H$ is triangle-free and $\nu(H)=3$, we have $5\leq \ell\leq 7$. If $\ell=7$, then  $H$ has to be a $C_7$ since $\nu(H)=3$.  Thus we may assume that $\ell=5$.

Let $C=v_1v_2v_3v_4v_5v_1$.  If there is an edge $xy$ in $V(H)-V(C)$, then by $\nu(H)=3$ we infer that at most one of $x$ and $y$ has neighbors outside $V(C)\cup \{x,y\}$. Then $V(H) = V(C) \cup \{x,y\}\cup N(x)\cup N(y)$. This implies that $H$ contains a $C_5\cup K_{1,r}$ and $|V(H)|=r+6$, we are done. Therefore,  $V(H)-V(C)$  is an independent set and  $V(C)$ is a covering of $H$. By $\tau(H)\geq 4$, $\{v_1,v_3,v_5\}$ is not a covering. We infer that at least one of $v_2$, $v_4$ has neighbors outside $C$. Without loss of generality, assume $v_2a\in E(H)$. Similarly, since $\{v_2,v_4,v_5\}$ is not a covering, at least one of $v_1$, $v_3$ has neighbors outside $C$. Assume $v_1b\in E(H)$. Since $H$ is triangle-free, we have $a\neq b$. As  $\{v_1,v_2,v_4\}$ is not a covering, we may assume $v_3c\in E(H)$. Since $\nu(H)=3$ and $H$ is triangle-free, we must have $c=b$. Now $v_1bv_3v_4v_5v_1$ is a new $C_5$ and $v_2a$ is an edge outside of this $C_5$. By the previous case we are done.
\end{proof}

\section{Proof of Theorem \ref{thm-1.4} }

\begin{proof}[Proof of Theorem \ref{thm-1.4}]
Suppose for contradiction that
\begin{align}\label{ineq-assumpt}
e(G)> \frac{(n-4)^2}{4}+16.
\end{align}
By Lemma \ref{lem-2.3}, there exists $T\subseteq V(G)$ with $|T| \leq 15$ such that $G-T$ is bipartite on partite sets $X,Y$. Let
\begin{align}\label{eq-0}
T_X=\{v\in T:\deg(v,Y)\geq  \deg(v,X)\} \mbox{ and }T_Y=\{v\in T:\deg(v,X)>  \deg(v, Y)\} .
\end{align}
Clearly $T=T_X\cup T_Y$ and $(X\cup T_X,Y\cup T_Y)$ is a bipartition of $V(G)$. Let $X^*=X\cup T_X$, $Y^*=Y\cup T_Y$ and let $H$ be the subgraph of $G$ induced by the edge set $E(X^*)\cup E(Y^*)$. Thus, $H$ contains no isolated vertices and $e(G)=e(H)+e(X^*,Y^*)$.

Note that  $\tau(H)\geq d_2(G)\geq 4$. Since $H$ is triangle-free,  it is known that $\tau(H) < 2\nu(H)$. It follows that $\nu(H)>\frac{\tau(H)}{2}\geq 2$. Then $\nu(H)\geq 3$. Now we distinguish two cases.

\vspace{5pt}
 {\bf \noindent Case 1. } $\nu(H)=3$.
\vspace{5pt}

By Lemma \ref{lem-3.1}, $H$ is either a $C_7$ or contains a $C_5\cup K_{1,r}$ as a subgraph. If $H$  is a $C_7$, by symmetry we assume $V(H)\subset X^*$. Since $G$ is triangle-free,
  each vertex in $Y^*$ has at most three neighbors on $V(H)$. It implies that  $e(V(H),Y^*)\leq 3|Y^*|$. Then
\begin{align*}
e(X^*, Y^*)= e(X^*\setminus V(H),Y^*)+e(V(H),Y^*)&\leq (|X^*|-7)|Y^*|+3|Y^*|\\[3pt]
&\leq \frac{(|X^*|+|Y^*|-4)^2}{4}\\[3pt]
&= \frac{(n-4)^2}{4}.
\end{align*}
As $e(H)=7$, we have
\begin{align*}
e(G)= e(X^*, Y^*)+e(H)&\leq \frac{(n-4)^2}{4}+7< \left\lfloor\frac{(n-4)^2}{4}\right\rfloor+16,
\end{align*}
contradicting \eqref{ineq-assumpt}. Thus $H$ contains a $C_5\cup K_{1,r}$ as a subgraph and $|V(H)|=6+r$.

Let $C$ be the  vertex set  of the $C_5$ and $U$ be the vertex set of the $K_{1,r}$.    If $V(K_{1,r})\cap (X\cup Y)\neq \emptyset $, then choose  $v\in T\cap V(K_{1,r})$ and let $V(K_{1,r})\cap(X\cap Y)=\{v_1,v_2,\ldots,v_\ell\}$.  If $V(K_{1,r})\subset T$, then let  $\ell=1$ and let $vv_1$ be an edge in $K_{1,r}$. Clearly $v,v_1,v_2,\ldots,v_\ell$ form a subtree $K_{1,\ell}$ of $K_{1,r}$ with root $v$.

Let $S= C\cup\{v,v_1,\ldots,v_\ell\}$, $X_1= X \cup  (S\cap T_X)$ and $Y_1= Y\cup (S\cap T_Y)$. Then
\begin{align}\label{ineq-7}
e(G) \leq  \sum_{v\in {T\setminus S}}\deg(v)+e(X_1,Y_1) + e_H(S).
\end{align}

By symmetry we distinguish two subcases. Note that by the definition of $T_X,T_Y$ in \eqref{eq-0}, $T_X$ and $T_Y$ are not symmetric. However in the following proof we shall only use $\deg(v,X)\geq \deg(v, Y)$ for each $v\in T_Y$ and $\deg(u,Y)\geq \deg(u, X)$ for each $u\in T_X$.

\vspace{5pt}
 {\bf \noindent Subcase 1.1.}   $C\subset X_1$ and $\{v,v_1,\ldots,v_\ell\}\subset Y_1$.

Recall that $v$ be the root of $K_{1,r}$ and $v\in T$. Then $v\in T_Y$. Since $G$ is triangle-free, each vertex in $Y_1$ has at most two neighbors in $C$. Since $G$ is triangle-free and $\{v,v_1\}$ is an edge, each vertex in $X_1$ has at most one neighbor in $\{v,v_1\}$.  If $\ell=1$ and $v_1\in T$, then
 \begin{align}\label{ineq-9-0}
 e(X_1,Y_1)+e_H(S)\leq |X_1||Y_1| -3|Y_1|-|X_1\setminus C|+6 = (|X_1|-3)(|Y_1|-1)+8.
 \end{align}

If $v_1\in Y$, then let $k$ be the number of neighbors of $v$ in $X_1\setminus C$. By the definition of $T_y$, $k\geq 1$. Since $G$ is triangle-free, there is no edge between $N(v,X_1\setminus C)$ and $\{v_1,v_2,\ldots,v_\ell\}$. Thus,
\begin{align}\label{ineq-9}
 e(X_1,Y_1)+e_H(S)&\leq |X_1||Y_1|-\ell k -3|Y_1|-(|X_1|-k-5)+(5+j)\nonumber\\[3pt]
 &=(|X_1|-3)(|Y_1| -1)-(\ell-1)(k-1)+8\nonumber\\[3pt]
 &< \frac{(n-|T\setminus S|-4)^2}{4}+8.
\end{align}
Comparing \eqref{ineq-9-0} and \eqref{ineq-9},  we have
\begin{align*}
 e(X_1,Y_1)+e_H(S)&\leq \frac{(n-|T\setminus S|-4)^2}{4}+8.
\end{align*}
Together this with \eqref{ineq-7}  and \eqref{ineq-key1000}, we get
 \begin{align*}
 e(G)\leq \frac{(n-|T\setminus S|-4)^2}{4} +\sum_{v\in {T\setminus S}}\deg(v)+8 \leq \frac{(n-4)^2}{4}+8,
 \end{align*}
contradicting \eqref{ineq-assumpt}.

\vspace{5pt}
 {\bf \noindent Subcase 1.2.}  $C \cup \{v,v_1,\ldots,v_\ell\}\subset X_1$.

 If $\ell=1$,  noting that each of $v$  and $v_1$ has at most two neighbors in $C$,  then $e_H(S)\leq 5+1+2\times2=10$. Since $G$ is triangle-free, each vertex in $Y_1$ has at most
 three neighbors in $C\cup \{v,v_1\}$.  Then
 \begin{align}\label{ineq-new1}
 e(X_1,Y_1) + e_H(S)\leq  |X_1||Y_1| -4|Y_1|+10\leq \frac{(n-|T\setminus S|-4)^2}{4}+10.
 \end{align}

If $\ell\geq 2$, then by  $\nu(H)=3$ each $v_i$ has no neighbor in $C$. As $v$ has at most two neighbors on $C$,  $e_H(S)\leq 5+\ell+2=\ell+7$.  Note that $v\in T_X$. By the definition of $T_X$, $v$ has at least $\ell$ neighbors in $Y$. Note that each vertex in $Y_1$ has at most
 three neighbors on $C\cup \{v,v_1\}$. Moreover, there is no edge between $\{v_1,v_2,\ldots,v_\ell\}$ and $N(v, Y)$. It follows that
 \begin{align}\label{ineq-new2}
 e(X_1,Y_1)+ e_H(S)& \leq |X_1||Y_1| -4|Y_1|-(\ell-1)\ell+\ell+7\nonumber\\[3pt]
  &\leq  (|X_1|-4)|Y_1|+7\nonumber\\[3pt]
 &\leq \frac{(n-|T\setminus S|-4)^2}{4}+7.
 \end{align}
 Combining \eqref{ineq-new1} and \eqref{ineq-new2}, we infer that
 \begin{align*}
e(G) \leq e(X_1,Y_1)+ e_H(S) +\sum_{v\in {T\setminus S}}\deg(v) &\leq  \frac{(n-|T\setminus S|-4)^2}{4}+10+\sum_{v\in {T\setminus S}}\deg(v) \\[3pt]
&\overset{\eqref{ineq-key1000}}{\leq} \frac{(n-4)^2}{4}+8,
 \end{align*}
contradicting \eqref{ineq-assumpt}.

 \vspace{5pt}
 {\bf \noindent Case 2. } $\nu(H)\geq 4$.
 \vspace{5pt}

Let us choose a matching $M$ of size 4 in $H$.  Let $M_1$ be the set of edges of $M$ in $G[X^*]$ and $M_2$ be the set of  edges of $M$ in $G[Y^*]$.  Set $S_1=V(M_1)$, $S_2=V(M_2)$, $S=S_1\cup S_2$, $X_1=X\cup S_1$ and $Y_1=Y\cup S_2$.  Let $H_1=G[X_1]\cup G[Y_1]$.
 Then
\begin{align}\label{ineq-2}
e(G) \leq  e_G(X_1,Y_1) + e(H_1)+\sum_{v\in {T\setminus S}}\deg(v).
\end{align}

Note that for each $uv\in M_1$, $u$ and $v$ have no common neighbors in $Y_1$. Similarly, for each $uv\in M_2$, $u$ and $v$ have no common neighbors in $X_1$. Let $G^*$ be the graph obtained from $G[X_1,Y_1]$ by contracting each edge in $M$ to a single vertex. Let $\bar{e}(X\setminus S,Y\setminus S)$ be the number of missing edges in $G$ between $X\setminus S$ and $Y\setminus S$. Then
\begin{align*}
e_{G}(X_1,Y_1)=e(G^*) \leq \frac{(n-|T\setminus S|-4)^2}{4}-\bar{e}(X\setminus S,Y\setminus S).
\end{align*}
Using \eqref{ineq-2} and \eqref{ineq-key1000}, we get
\begin{align}\label{ineq-4}
e(G) \leq  e_{G}(X_1,Y_1)+\sum_{v\in {T\setminus S}}\deg(v)+ e(H_1)\leq \frac{(n-4)^2}{4}+e(H_1)-\bar{e}(X\setminus S,Y\setminus S).
\end{align}
 By \eqref{ineq-assumpt} we have
 \begin{align}\label{ineq-3.10}
     e(H_1)\geq 17+\bar{e}(X\setminus S,Y\setminus S).
 \end{align}

Note that $M$ is a matching of size $4$ in $H$.  Among all such matchings, we choose $M$ such that $p=|V(M)\cap (X\cup Y)|$ is maximum. Clearly $p\leq 4$. Assume that $M=\{u_1v_1,\cdots, u_4v_4\}$ with $v_1,\ldots,v_p\in X\cup Y$ and $u_1,\ldots,u_4,v_{p+1},\ldots,v_4\in T$.  Since $p$ is maximum, we infer that $u_{p+1},\ldots,u_4$, $v_{p+1},\ldots,v_4$ have no neighbor in $(X\cup Y)\setminus S$ in $H_1$.
By symmetry, assume that $u_1\in S_1$  and $a=|N_{H_1}(u_1,X\setminus S)|$ is the maximum over all $|N_{H_1}(u_i,X\cup Y\setminus S)|$, $i=1,2,3,4$.
  Thus,
\begin{align}\label{ineq-eh1}
e(H_1) = \sum_{1\leq i\leq p} |N_{H_1}(u_i,(X\cup Y)\setminus S)| +e_{H_1}(S).
\end{align}

If $p=0$ then $S\subset T$. By Mantel's theorem $e(H_1)\leq \frac{|S|^2}{4}=  16$, a contradiction. Thus we have $1\leq p\leq 4$.
  Recall that  $u_1\in S_1$ and $a=|N_{H_1}(u_1,X\setminus S)|$ is the maximum over all $|N_{H_1}(u_i,X\cup Y\setminus S)|$, $i=1,2,3,4$. Let $|M_1|=q$ and $|M_2|=4-q$.
  Since $G$ is triangle-free, by Mantel's theorem we have
  \[
  e_{H_1}(S)=e_{H_1}(S_1)+e_{H_1}(S_2)\leq q^2+(4-q)^2.
  \]
By \eqref{ineq-eh1} we get
 \begin{align}\label{eq-3.17}
 e(H_1) \leq pa+q^2+(4-q)^2.
 \end{align}
 Note that \eqref{ineq-3.10} implies $a\geq 1$.

\vspace{5pt}
 {\bf \noindent Subcase 2.1.} $q=1$.

By \eqref{ineq-3.10} and \eqref{eq-3.17},
\[
17+\bar{e}(X\setminus S,Y\setminus S)\leq e(H_1)\leq pa+q^2+(4-q)^2 \leq 4a+10.
\]
It follows that $a\geq 2$.
 Without loss of generality, let $u_2\in S_2$ such that  $b=|N_{H_1}(u_2,Y\setminus S)|$ is the maximum over all $|N_{H_1}(u_i,Y\setminus S)|$, $i=2,3,4$. Clearly $b\leq a$.

If $b=a$, then by the definition of $T_Y$, $|N(u_2,X\setminus S)|\geq |N(u_2,Y\setminus S)|=a$. Since there is no edge between $N(u_2,X\setminus S)$ and $N(u_2,Y\setminus S)$, we infer that
  \[
 4a+10\geq  17+\bar{e}(X\setminus S,Y\setminus S) \geq 17+a^2,
  \]
  a contradiction. Thus $b\leq a-1$.

 By \eqref{ineq-eh1}, $e(H_1)\leq a+3b+10\leq 4a+7$.   Recall that $u_1$ has at least $a+1-3=a-2$ neighbors in $Y\setminus S$. Since there is no edge between $N(u_1,X\setminus S)$ and $N(u_1,Y\setminus S)$, we infer that
  \[
  \ 4a+7\geq e(H_1)\geq  17+\bar{e}(X\setminus S,Y\setminus S) \geq 17+ a(a-2).
\]
It follows that $a^2-6a+10\leq 0$, which is impossible.

  \vspace{5pt}
 {\bf \noindent Subcase 2.2.} $2\leq q\leq 3$.

 By \eqref{eq-3.17} and \eqref{ineq-3.10}, we have
 \[
 17+\bar{e}(X\setminus S,Y\setminus S)\leq e(H_1) \leq  pa+q^2+(4-q)^2\leq 4a+10.
 \]
 It follows that $a\geq 2\geq 4-q$. Note that $u_1$ has at least $a+1-(4-q)=a+q-3$ neighbors in $Y\setminus S$. Since  there is no edges between $N(u_1,X\setminus S)$ and $N(u_1,Y\setminus S)$, we infer that
  \[
\bar{e}(X\setminus S,Y\setminus S) \geq a(a+q-3).
  \]
  It follows that
  \[
   a(a+q-3)+17\leq 4a+q^2+(4-q)^2 =2q^2-8q+16+4a.
  \]
  Thus  we have
  \[
  2q^2-8q+16+4a - a(a+q-3)-17 = 2q^2-(8+a)q+7a-a^2-1\geq 0.
  \]
  Define $f(q):=2q^2-(8+a)q+7a-a^2-1$.  Note that
 \[
 f(2)=-9-a(a-5)\leq -3, \   \  f(3)=-7-a(a-4)\leq -3,
 \]
 a contradiction.

 \vspace{5pt}
 {\bf \noindent Subcase 2.3.}  $q=4$.

By \eqref{ineq-eh1} we have $e(H_1)\leq e_{H_1}(S)+pa$. Since $u_1$ has at least $a+1$ neighbors in $Y\setminus S$ and $e(N(u_1,X\setminus S),N(u_1,Y\setminus S))=0$,
\[
 \bar{e}(X\setminus S,Y\setminus S) \geq a(a+1).
  \]
 By  \eqref{ineq-3.10},
 \[
 17+a(a+1)\leq 17+\bar{e}(X\setminus S,Y\setminus S)\leq e(H_1) \leq  pa+e_{H_1}(S).
 \]
 Thus $e_{H_1}(S) \geq 17+a(a+1)-pa$.
 By Mantel's theorem $e_{H_1}(S)\leq 16$. Thus $p\geq a+2\geq 3$ and thereby
 \[
 e_{H_1}(S) \geq 17+a(a+1)-4a\geq (a-1)(a-2)+15.
 \]
 Thus $e_{H_1}(S)\geq 15$.

 Since $e_{H_1}(S)\geq 15$ and $G[S]$ is triangle-free, by Theorem \ref{erdos2}, $G[S]$ must be bipartite. Since $\{u_1v_1,u_2v_2,u_3v_3,u_4v_4\}$ is a matching, $G[S]$ is a bipartite graph with partite sets of equal size. Note that  $p\geq 3$. It follows that $\{v_1,v_2,v_3\}$ is in one partite set and $\{u_1, u_2, u_3\}$ is in the other partite set in $G[S]$.
Since $H_1[S]$ is obtained from a $K_{4,4}$ by removing at most one edge, at least one of  $u_1v_2$ and  $u_1v_3$ is an edge. Thus $u_1$ has at least $a+2$ neighbors in $X$ and  thereby has at least $a+2$ neighbors in $Y$. Therefore,
\[
 \bar{e}(X\setminus S,Y\setminus S) \geq a(a+2).
  \]
  By \eqref{ineq-assumpt} and \eqref{ineq-key1000},
  \[
  a(a+2) +17 \leq  \bar{e}(X\setminus S,Y\setminus S) +17 \leq  e(H_1) \leq  pa+e_{H_1}(S).
  \]
  Then
  \[
 16\geq  e_{H_1}(S) \geq a(a+2)+17-pa\geq a(a+2)+17-4a =(a-1)^2+16.
  \]
Thus $a=1$, $p=4$ and $e_{H_1}(S)=16$. Hence $u_iv_j\in E(G)$ for all $i=1,2,3,4$ and $j=1,2,3,4$. Then $u_1$ has exactly $5$ neighbors in $X$ and  thereby has at least $5$ neighbors in $Y$.  It follows that
  \[
  1\times 5+17 \leq  \bar{e}(X\setminus S,Y\setminus S) +17 \leq  e(H_1) \leq  pa+e_{H_1}(S)= 4+16,
  \]
  the final contradiction.
\end{proof}
\section{Proof of Theorem \ref{thm-1.8} }
\begin{proof}[Proof of Theorem \ref{thm-1.8}]
 Let $G$ be a triangle-free  graph on $n$ vertices with $\chi(G)\geq 4$ and $n\geq 90$.
If $d_2(G)\geq 4$, then by Theorem \ref{thm-1.4} and $n\geq 90$ we have
\[
e(G)\leq\left\lfloor\frac{(n-4)^2}{4}\right\rfloor+16< \left\lfloor\frac{(n-3)^2}{4}\right\rfloor-\frac{n}{2}+16<\left\lfloor\frac{(n-3)^2}{4}\right\rfloor+4
\]
and we are done. Note that $d_2(G)\leq 1$ would imply $\chi(G)\leq 3$. Thus we may assume that $2\leq d_2(G)\leq 3$.

If $d_2(G)= 2$, then there exist $x,y\in V(G)$ such that  $G-\{x,y\}$ is bipartite.
Let $A,B$ be partite sets of $G-\{x,y\}$.
 Since $\chi(G)\geq 4$, $\chi(G[\{x,y\}\cup A])=3$. Thus $xy\in E(G)$ and $G[\{x,y\}\cup A]$ contains an odd cycle $C$.
Since $A$ is an independent set,  $C$ has to be a triangle, a contradiction. Thus $d_2(G)= 3$.

Let $x,y,z\in V(G)$ such that $G-\{x,y,z\}$ is a bipartite graph on partite sets $A, B$. Since $\chi(G)\geq 4$, $\chi(G[\{x,y,z\}\cup A]) \geq 3$ and $\chi(G[\{x,y,z\}\cup B]) \geq 3$.
Thus, both $G[\{x,y,z\}\cup A]$ and $G[\{x,y,z\}\cup B]$ contain  odd cycles.

Let $C$ be an odd cycle in $G[\{x,y,z\}\cup A]$.
Since $A$ is an independent set,  at most $\frac{|V(C)|-1}{2}$ vertices on $C$ are in $A$. It follows that at least $\frac{|V(C)|+1}{2}$ vertices are in $\{x,y,z\}$. It follows that $|V(C)|\leq 5$. Since $G$ is triangle-free, $|V(C)|=5$ and $x,y,z$ are all on $C$.
Moreover,  there is exactly one edge in  $G[\{x,y,z\}]$. Without loss of generality, assume that $xy\in E(G)$ and $xz,yz\notin E(G)$. By the same argument, one can show that $G[\{x,y,z\}\cup B]$ contains a $C_5$ as well.

\begin{figure}[H]
\centering
\ifpdf
  \setlength{\unitlength}{0.05 mm}%
  \begin{picture}(781.2, 700.5)(0,0)
  \put(0,0){\includegraphics{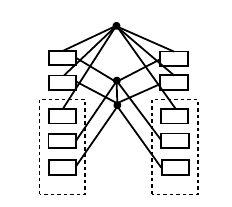}}
  \put(415.40,609.00){\fontsize{7.11}{8.54}\selectfont \makebox(25.0, 50.0)[l]{$z$\strut}}
  \put(162.75,482.07){\fontsize{5.69}{6.83}\selectfont \makebox(120.0, 40.0)[l]{$A_{xz}$\strut}}
  \put(541.29,479.85){\fontsize{5.69}{6.83}\selectfont \makebox(120.0, 40.0)[l]{$B_{xz}$\strut}}
  \put(162.07,401.49){\fontsize{5.69}{6.83}\selectfont \makebox(120.0, 40.0)[l]{$A_{yz}$\strut}}
  \put(541.21,400.21){\fontsize{5.69}{6.83}\selectfont \makebox(120.0, 40.0)[l]{$B_{yz}$\strut}}
  \put(401.35,436.38){\fontsize{7.11}{8.54}\selectfont \makebox(25.0, 50.0)[l]{$x$\strut}}
  \put(424.10,320.62){\fontsize{7.11}{8.54}\selectfont \makebox(25.0, 50.0)[l]{$y$\strut}}
  \put(184.66,283.07){\fontsize{5.69}{6.83}\selectfont \makebox(60.0, 40.0)[l]{$A_z$\strut}}
  \put(180.44,200.57){\fontsize{5.69}{6.83}\selectfont \makebox(60.0, 40.0)[l]{$A_x$\strut}}
  \put(183.96,112.83){\fontsize{5.69}{6.83}\selectfont \makebox(60.0, 40.0)[l]{$A_y$\strut}}
  \put(561.26,282.76){\fontsize{5.69}{6.83}\selectfont \makebox(60.0, 40.0)[l]{$B_z$\strut}}
  \put(564.22,199.76){\fontsize{5.69}{6.83}\selectfont \makebox(60.0, 40.0)[l]{$B_x$\strut}}
  \put(566.94,113.60){\fontsize{5.69}{6.83}\selectfont \makebox(60.0, 40.0)[l]{$B_y$\strut}}
  \put(40.00,175.06){\fontsize{5.69}{6.83}\selectfont \makebox(40.0, 40.0)[l]{$A'$\strut}}
  \put(701.16,178.08){\fontsize{5.69}{6.83}\selectfont \makebox(40.0, 40.0)[l]{$B'$\strut}}
  \end{picture}%
\else
  \setlength{\unitlength}{0.05 mm}%
  \begin{picture}(781.2, 700.5)(0,0)
  \put(0,0){\includegraphics{Graph6}}
  \put(415.40,609.00){\fontsize{7.11}{8.54}\selectfont \makebox(25.0, 50.0)[l]{$z$\strut}}
  \put(162.75,482.07){\fontsize{5.69}{6.83}\selectfont \makebox(120.0, 40.0)[l]{$A_{xz}$\strut}}
  \put(541.29,479.85){\fontsize{5.69}{6.83}\selectfont \makebox(120.0, 40.0)[l]{$B_{xz}$\strut}}
  \put(162.07,401.49){\fontsize{5.69}{6.83}\selectfont \makebox(120.0, 40.0)[l]{$A_{yz}$\strut}}
  \put(541.21,400.21){\fontsize{5.69}{6.83}\selectfont \makebox(120.0, 40.0)[l]{$B_{yz}$\strut}}
  \put(401.35,436.38){\fontsize{7.11}{8.54}\selectfont \makebox(25.0, 50.0)[l]{$x$\strut}}
  \put(424.10,320.62){\fontsize{7.11}{8.54}\selectfont \makebox(25.0, 50.0)[l]{$y$\strut}}
  \put(184.66,283.07){\fontsize{5.69}{6.83}\selectfont \makebox(60.0, 40.0)[l]{$A_z$\strut}}
  \put(180.44,200.57){\fontsize{5.69}{6.83}\selectfont \makebox(60.0, 40.0)[l]{$A_x$\strut}}
  \put(183.96,112.83){\fontsize{5.69}{6.83}\selectfont \makebox(60.0, 40.0)[l]{$A_y$\strut}}
  \put(561.26,282.76){\fontsize{5.69}{6.83}\selectfont \makebox(60.0, 40.0)[l]{$B_z$\strut}}
  \put(564.22,199.76){\fontsize{5.69}{6.83}\selectfont \makebox(60.0, 40.0)[l]{$B_x$\strut}}
  \put(566.94,113.60){\fontsize{5.69}{6.83}\selectfont \makebox(60.0, 40.0)[l]{$B_y$\strut}}
  \put(40.00,175.06){\fontsize{5.69}{6.83}\selectfont \makebox(40.0, 40.0)[l]{$A'$\strut}}
  \put(701.16,178.08){\fontsize{5.69}{6.83}\selectfont \makebox(40.0, 40.0)[l]{$B'$\strut}}
  \end{picture}%
  \fi
\caption[Figure 2: ]{\mbox{ The partition of  $(A,B)$. }}\label{fig-2}
\end{figure}

Let $A_{xz}=N(x)\cap N(z)\cap A$, $B_{xz}=N(x)\cap N(z)\cap B$, $A_{yz}=N(y)\cap N(z)\cap A$ and $B_{yz}=N(y)\cap N(z)\cap B$. Since both $G[\{x,y,z\}\cup A]$ and $G[\{x,y,z\}\cup B]$ contain odd cycles of length 5, we infer that $|A_{xz}|,|A_{yz}|, |B_{xz}|,|B_{yz}|\geq 1$. Let
 $$A_x = (N(x)\cap A)\setminus A_{xz}, A_y = (N(y)\cap A)\setminus A_{yz}, B_x = (N(x)\cap B)\setminus B_{xz}, B_y = (N(y)\cap B)\setminus B_{yz}$$
 and let
$$A_z= (N(z)\cap A)\setminus (A_{xz}\cup A_{yz}), B_z=(N(z)\cap B)\setminus (B_{xz}\cup B_{yz}).$$
Let $A'=A\setminus (A_{xz}\cup A_{yz})$ and $B'=B\setminus (B_{xz}\cup B_{yz})$ (as shown in Figure \ref{fig-2}). Clearly,  $e(\{x\}, A'\cup B')=|A_x|+|B_x|$, $e(\{y\}, A'\cup B')=|A_y|+|B_y|$ and $e(\{z\}, A'\cup B')=|A_z|+|B_z|$.
Note that   $A_x$, $A_y$, $A_z$, $B_x$, $B_y$ and $B_z$ are pairwise disjoint.

Let $D=A_{xz}\cup B_{xz}\cup A_{yz}\cup B_{yz}$. Since $G$ is triangle-free, $A_{xz}, B_{xz}, A_{yz}, B_{yz}$ are pairwise disjoint. Thus,
\begin{align}\label{eq-1}
e(D,\{x,y,z\})= 2(|A_{xz}|+|A_{yz}|+|B_{xz}|+|B_{yz}|).
\end{align}
Since $G$ is triangle-free, there is no edge between $A_{xz}$ and $B_x\cup B_z$.
Thus,
 $$e(A_{xz},B')\leq  |A_{xz}|(|B'|-|B_x|-|B_z|).$$
 Similarly,
  $e(B_{xz},A')\leq |B_{xz}|(|A'|-|A_x|-|A_z|)$. Then
 \begin{align}\label{ineq-new200}
e(A_{xz}\cup B_{xz}\cup \{x\}, A'\cup B')&=e(A_{xz},  B')+e(B_{xz}, A')+e(\{x\}, A'\cup B')\nonumber\\[5pt]
&\leq |A_{xz}|(|B'|-|B_x|-|B_z|)+|B_{xz}|(|A'|-|A_x|-|A_z|)+|A_x|+|B_x|\\[5pt]
& \leq |A_{xz}||B'|+|B_{xz}||A'|-(|A_{xz}|-1)|B_x|-(|B_{xz}|-1)|A_x|\nonumber\\[3pt]
&\qquad-|A_z|-|B_z|,\nonumber
\end{align}
where the last inequality holds because $|A_{xz}|, |B_{xz}|\geq 1$.
Similarly,
\begin{align}\label{ineq-new300}
e(A_{yz}\cup B_{yz}\cup \{y\}, A'\cup B')=&e(A_{yz},  B')+e(B_{yz}, A')+e(\{y\}, A'\cup B')\nonumber\\[5pt]
&\leq |A_{yz}|(|B'|-|B_y|-|B_z|)+|B_{yz}|(|A'|-|A_y|-|A_z|)+|A_y|+|B_y|\\[5pt]
\leq & |A_{yz}||B'|+|B_{yz}||A'|-(|A_{yz}|-1)|B_y|-(|B_{yz}|-1)|A_y|\nonumber\\[3pt]
&-|A_z|-|B_z|.\nonumber
\end{align}
Note that $e(\{z\}, A'\cup B')=|A_z|+|B_z| $. Thus we obtain that
  \begin{align}\label{eq-4}
 &e(D\cup \{x,y,z\},A'\cup B')\nonumber\\[3pt]
 =&e(A_{xz}\cup B_{xz}\cup \{x\}, A'\cup B')+e(A_{yz}\cup B_{yz}\cup \{y\}, A'\cup B')+e(\{z\}, A'\cup B')\nonumber\\[3pt]
 \leq&|A_{xz}||B'|+|B_{xz}||A'|+|A_{yz}||B'|+|B_{yz}||A'|-(|A_{xz}|-1)|B_x|-(|B_{xz}|-1)|A_x|\nonumber\\[3pt]
 &-(|A_{yz}|-1)|B_y|-(|B_{yz}|-1)|A_y|-|A_z|-|B_z| \nonumber\\[3pt]
 \leq & (|A_{xz}|+|A_{yz}|)|B'|+(|B_{xz}|+|B_{yz}|)|A'|, \end{align}
 where the last inequality holds because $|A_{xz}|,|A_{yz}|,|B_{xz}|,|B_{yz}|\geq 1$.
Since $G$ is triangle-free, $e(A_x,B_x)=0$, $e(A_y,B_y)=0$ and $e(A_z,B_z)=0$. It follows that
\begin{align}\label{ineq-100}
e(A',B')&\leq |A'||B'|-|A_x||B_x|-|A_y||B_y|-|A_z||B_z|\leq |A'||B'|.
\end{align}
Since $D\subset N(z)$ and $G$ is triangle-free, $e(G[D])=0$. Combining \eqref{eq-1}, \eqref{eq-4} and \eqref{ineq-100}, we obtain that
\begin{align*}
 e(G)&= e(D,\{x,y,z\})+1+e(D\cup \{x,y,z\},A'\cup B')+e(A',B')\\[5pt]
 &\leq  2(|A_{xz}|+|A_{yz}|+|B_{xz}|+|B_{yz}|)+1+(|A_{xz}|+|A_{yz}|)|B'|+(|B_{xz}|+|B_{yz}|)|A'|\\[3pt]
 &\qquad+|A'||B'|\\[3pt]
   &= (|A'|+|A_{xz}|+|A_{yz}|)(|B'|+|B_{xz}|+|B_{yz}|)-(|A_{xz}|+|A_{yz}|-2)(|B_{xz}|+|B_{yz}|-2)\\[5pt]
  &\qquad +5\\[3pt]
  &= |A||B|-(|A_{xz}|+|A_{yz}|-2)(|B_{xz}|+|B_{yz}|-2) +5.
\end{align*}
  Since $|A|+|B|= n-3$, we have
$|A||B|\leq \lfloor\frac{(n-3)^2}{4}\rfloor$.
Using $|A_{xz}|, |A_{yz}|, |B_{xz}|, |B_{yz}|\geq 1$, we obtain that
  \begin{align}\label{ineq-final}
 e(G)\leq \left\lfloor\frac{(n-3)^2}{4}\right\rfloor+5.
  \end{align}

Equality holds in
 \eqref{ineq-final} if and only if  equality holds  in \eqref{ineq-new200}, \eqref{ineq-new300}, \eqref{eq-4} and \eqref{ineq-100}, and $\{|A|,|B|\}= \{\lfloor\frac{n-3}{2}\rfloor,\lceil\frac{n-3}{2}\rceil\}$, $G[A',B']$ is complete bipartite.  Equality holds  in \eqref{ineq-new200}  if and only if both $G[A_{xz}, B'\setminus (B_x\cup B_z)]$ and $G[B_{xz}, A'\setminus (A_x\cup A_z)]$ are complete bipartite. Equality holds  in \eqref{ineq-new300}  if and only if  both $G[A_{yz}, B'\setminus (B_y\cup B_z)]$ and $G[B_{yz}, A'\setminus (A_y\cup A_z)]$ are complete bipartite. Equality holds  in \eqref{eq-4}  if and only if
 \begin{align}\label{ineq-200}
(|A_{xz}|-1)|B_x|=(|B_{xz}|-1)|A_x|=(|A_{yz}|-1)|B_y|=(|B_{yz}|-1)|A_y|=|A_z|=|B_z|=0.
 \end{align}
Equality holds  in \eqref{ineq-100} if and only if $|A_x||B_x|=|A_y||B_y|=0$.

Note that $N(z)$ is an independent set. Then by $\chi(G)\geq 4$ we infer that $G-N(z)$ contains an odd cycle $C'$. Then $V(C')\subset \{x,y\}\cup (A'\setminus A_z)\cup (B'\setminus  B_z)$.
Since $|A_x||B_x|=0=|A_y||B_y|$, we infer that $\{x,y\}\subset V(C')$. Since $C'$ is an odd cycle and $|A_x||B_x|=0=|A_y||B_y|$,  we infer that $xy$  is an edge on $C'$ and either $A_x\neq \emptyset \neq A_y$ or $B_x\neq \emptyset \neq B_y$.

\begin{figure}[H]
\centering
\ifpdf
  \setlength{\unitlength}{0.05 mm}%
  \begin{picture}(966.3, 710.6)(0,0)
  \put(0,0){\includegraphics{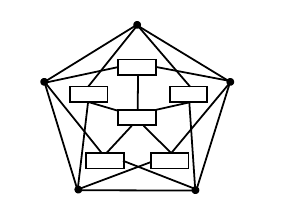}}
  \put(500.02,629.43){\fontsize{5.69}{6.83}\selectfont \makebox(20.0, 40.0)[l]{$z$\strut}}
  \put(806.26,417.59){\fontsize{5.69}{6.83}\selectfont \makebox(120.0, 40.0)[l]{$B_{xz}$\strut}}
  \put(688.96,38.80){\fontsize{5.69}{6.83}\selectfont \makebox(20.0, 40.0)[l]{$x$\strut}}
  \put(193.59,38.80){\fontsize{5.69}{6.83}\selectfont \makebox(20.0, 40.0)[l]{$y$\strut}}
  \put(40.00,418.70){\fontsize{5.69}{6.83}\selectfont \makebox(120.0, 40.0)[l]{$B_{yz}$\strut}}
  \put(634.00,542.64){\fontsize{0.69}{0.83}\selectfont \makebox(38.7, 4.8)[l]{$A'\setminus (A_x\cup A_y)$\strut}}
  \put(318.71,416.76){\fontsize{5.69}{6.83}\selectfont \makebox(120.0, 40.0)[l]{$A_{yz}$\strut}}
  \put(636.56,421.49){\fontsize{5.69}{6.83}\selectfont \makebox(120.0, 40.0)[l]{$A_{xz}$\strut}}
  \put(540.46,232.18){\fontsize{5.69}{6.83}\selectfont \makebox(60.0, 40.0)[l]{$A_y$\strut}}
  \put(322.16,224.50){\fontsize{5.69}{6.83}\selectfont \makebox(60.0, 40.0)[l]{$A_x$\strut}}
  \put(480.22,346.69){\fontsize{5.69}{6.83}\selectfont \makebox(40.0, 40.0)[l]{$B'$\strut}}
  \end{picture}%
\else
  \setlength{\unitlength}{0.05 mm}%
  \begin{picture}(966.3, 710.6)(0,0)
  \put(0,0){\includegraphics{graph2}}
  \put(500.02,629.43){\fontsize{5.69}{6.83}\selectfont \makebox(20.0, 40.0)[l]{$z$\strut}}
  \put(806.26,417.59){\fontsize{5.69}{6.83}\selectfont \makebox(120.0, 40.0)[l]{$B_{xz}$\strut}}
  \put(688.96,38.80){\fontsize{5.69}{6.83}\selectfont \makebox(20.0, 40.0)[l]{$x$\strut}}
  \put(193.59,38.80){\fontsize{5.69}{6.83}\selectfont \makebox(20.0, 40.0)[l]{$y$\strut}}
  \put(40.00,418.70){\fontsize{5.69}{6.83}\selectfont \makebox(120.0, 40.0)[l]{$B_{yz}$\strut}}
  \put(634.00,542.64){\fontsize{0.69}{0.83}\selectfont \makebox(38.7, 4.8)[l]{$A'\setminus (A_x\cup A_y)$\strut}}
  \put(318.71,416.76){\fontsize{5.69}{6.83}\selectfont \makebox(120.0, 40.0)[l]{$A_{yz}$\strut}}
  \put(636.56,421.49){\fontsize{5.69}{6.83}\selectfont \makebox(120.0, 40.0)[l]{$A_{xz}$\strut}}
  \put(540.46,232.18){\fontsize{5.69}{6.83}\selectfont \makebox(60.0, 40.0)[l]{$A_y$\strut}}
  \put(322.16,224.50){\fontsize{5.69}{6.83}\selectfont \makebox(60.0, 40.0)[l]{$A_x$\strut}}
  \put(480.22,346.69){\fontsize{5.69}{6.83}\selectfont \makebox(40.0, 40.0)[l]{$B'$\strut}}
  \end{picture}%
\fi
\caption[Figure 5]{The structure of $G$. }\label{fig-4}
\end{figure}

By symmetry assume   $A_x\neq \emptyset \neq A_y$. Then $|A_x||B_x|=|A_y||B_y|=0$ implies $|B_x|=|B_y|=0$.  By \eqref{ineq-200}, $A_x\neq \emptyset \neq A_y$ implies  $|B_{xz}|=|B_{yz}|=1$. Hence $G$ is isomorphic to a graph in  $\hg(n)$ (as shown in Figure \ref{fig-4}). Thus the theorem holds.
\end{proof}


\end{document}